\documentclass[a4paper,12pt,reqno]{amsart}
\textheight = 24.0cm
\textwidth = 16.4cm
\topmargin = 0cm
\oddsidemargin = -0.2cm
\evensidemargin = -0.2cm
\usepackage{amssymb}
\usepackage{latexsym}
\usepackage{amsfonts}
\usepackage{amsmath}
\usepackage{amsthm}
\usepackage{multicol}
\usepackage{mathrsfs}
\theoremstyle{definition}
\newtheorem{definition}{Definition}[section]
\newtheorem{proposition}[definition]{Proposition}
\newtheorem{theorem}[definition]{Theorem}

\newtheorem{lemma}[definition]{Lemma}
\newtheorem{remark}[definition]{Remark}


\def\N{{\mathbb{N}}}

\def\R{{\mathbb{R}}}


\def\<{\mathop{<}}
\def\>{\mathop{>}}

\newcommand{\spt}{\mathrm{spt}\,}
\newcommand{\divergence}{\mathrm{div}\,}
\newcommand{\dist}{\mathrm{dist}\,}

\numberwithin{equation}{section}

\def\XXint#1#2#3{{\setbox0=\hbox{$#1{#2#3}{\int}$}
\vcenter{\hbox{$#2#3$}}\kern-.5\wd0}}

\makeatletter
\@namedef{subjclassname@2020}{\textup{2020} Mathematics Subject Classification}
\makeatother

\title[On obstacle problem for Brakke's mean curvature flow]
{On obstacle problem for \\
Brakke's mean curvature flow}
\author[K. Takasao]{Keisuke Takasao}
\address{Department of Mathematics/Hakubi Center, Kyoto University,
Kitashirakawa-Oiwakecho, Sakyo, Kyoto 606-8502, Japan}
\email{k.takasao@math.kyoto-u.ac.jp}
\keywords{mean curvature flow, Allen-Cahn equation, phase field method}
\subjclass[2020]{Primary~35K93, Secondary~53E10}
\thanks{}
\date{}
\begin{document}
\maketitle
\begin{abstract}
We consider the obstacle problem of the weak solution for the mean curvature flow, 
in the sense of Brakke's mean curvature flow.
We prove the global existence of the weak solution
with obstacles which have $C^{1,1}$ boundaries, in two and three space dimensions.
To obtain the weak solution, 
we use the Allen-Cahn equation with forcing term.
\end{abstract}

\section{Introduction}
Let $T>0$ and $d\geq 2$ be an integer. 
Assume  that $U_t \subset \mathbb{R}^d$ is a bounded open set and 
$M_t$ is a smooth boundary of $U_t$ for any $t \in[0,T)$. 
We call the family of the hypersurfaces $\{ M_t\}_{t \in [0,T)}$ the mean curvature flow if
\begin{equation}
v=h \qquad \text{on} \ M_t ,\ t \in (0,T).
\label{mcf}
\end{equation} 
Here, $v$ and $h$ are the normal velocity vector and the mean curvature vector of $M_t$, 
respectively.
%
%
Brakke~\cite{brakke} proved the global existence of the multi-phase weak solution 
to \eqref{mcf} called Brakke's mean curvature flow.
However, since the flow is defined by 
an integral inequality, its solution may become an empty set after a certain time.
Subsequently, Kim-Tonegawa~\cite{kim-tonegawa} 
proved the global existence of non-trivial
Brakke's mean curvature flow, 
by showing that the each volume of the multi-phase is continuous with respect to $t$.
The phase-field method and the elliptic regularization 
by Ilmanen~\cite{ilmanen1993, ilmanen1994} are known as
another proofs of the global existence of the Brakke's mean curvature flow.
Similar to the Brakke's mean curvature flow, 
the weak solution called $L^2$-flow was studied by 
Mugnai-R{\"o}ger~\cite{mugnai-roger2008, mugnai-roger2011}.
In addition, the regularity of Brakke's mean curvature flow is studied by 
Brakke~\cite{brakke}, White~\cite{white2005}, 
Kasai-Tonegawa~\cite{kasai-tonegawa}, and Tonegawa~\cite{tonegawa2014}.
About results for other types of weak solutions, 
the existence theorem of the viscosity solutions via the level set method
was presented independently by Chen-Giga-Goto~\cite{chen-giga-goto} 
and Evans-Spruck~\cite{evans-spruck1991} at the same period,
and a weak solution using a variational method
was studied by Almgren-Taylor-Wang~\cite{almgren-taylor-wang} 
and Luckhaus-Sturzenhecker~\cite{luckhaus-sturzenhecker}.

\bigskip

Let $O _+ $ and $O_ -$ be open sets with $\dist (O_+, O_-) >0$.
In this paper, we consider the weak solution to \eqref{mcf} 
with the obstacles $O_+$ and $O_-$,
namely, a family of open sets $\{ U_t \}_{t \in [0,T)}$
satisfies $O_+ \subset U_t$ and $U_t \cap O_-=\emptyset$ 
for any $t \in [0,T)$, and the boundary $M_t =\partial U_t$ satisfies \eqref{mcf}
on $( \overline{O_+ \cup O_-} )^c$, in the sense of Brakke's mean curvature flow.
Since the mean curvature flow
can be regarded as a simple model of the cell motility,
it is natural to consider its obstacle problem
(see \cite{elliott-stinner-venkataraman, m-b-r-z}). 
In addition, 
the obstacle problems for elliptic equations including the minimal surface equation
have been studied over a long period of time 
(see \cite{MR1658612, MR2962060, MR880369, MR2270163} and references therein).

About the obstacle problem for the mean curvature flow,
Almeida-Chambolle-Novaga~\cite{almeida-chambolle-novaga} 
showed the global existence of weak solutions for $d\geq 2$ by a variational method.
Moreover, they proved the short time existence and uniqueness of $C^{1,1}$ 
solutions for $d=2$,
when the obstacle has a compact $C^{1,1}$ boundary.
Mercier-Novaga~\cite{mercier-novaga} extended the short time existence and uniqueness of $C^{1,1}$ 
solutions for $d\geq 2$, and they also proved the global existence and uniqueness
of the graphical viscosity solutions if the boundaries of obstacles are also graphs.
In the case of the viscosity solution with the level set method, 
Mercier~\cite{mercier} showed the global existence and uniqueness of continuous viscosity solutions to
\[
u_t + F(\nabla u, \nabla ^2 u) +k|\nabla u|=0 \quad \text{on} \ \{ u^- \leq u \leq u^+ \},
\]
where $u^-$ and $u^+$ are given uniformly continuous functions with $u^-\leq u^+$, 
$k$ is a given Lipschitz function, 
and the assumptions of $F$ allow this equation to be
the mean curvature flow with forcing term $k$,
in the sense of the level set method.
Ishii-Kamata-Koike~\cite{ishii-kamata-koike} proved the global existence and uniqueness of Lipschitz viscosity
solutions when $k\equiv 0$ and $u^\pm$, $\partial_t u^\pm $, $ \partial _{x_k } u^\pm $,  
$ \partial _{x_k x_l} u^\pm \in L ^\infty (\mathbb{T}^d \times [0,\infty))$
for any $1\leq k,l \leq d$, where $\mathbb{T}^d=(\mathbb{R}/\mathbb{Z})^d$.
Giga-Tran-Zhang~\cite{giga-tran-zhang} studied the 
large time behavior of viscosity solutions 
with constant driving force $k$. 

Let $d = 2$ or $3$, $\Omega :=\mathbb{T}^d$, 
and the obstacles $O _+ , O_ - \subset \Omega$ have $C^{1,1}$ boundaries and
satisfy $\dist (O_+, O_-) >0$. 
In this paper, we prove the global existence of the weak solution to \eqref{mcf}
with obstacles
in the sense of Brakke (see Theorem \ref{thm5.3} below).
Note that the weak solution obtained in this paper has similar properties 
to the weak solution by the minimizing movement in \cite[Theorem 4.6]{almeida-chambolle-novaga}
(see Remark \ref{remark5.3} below).
However, since the uniqueness of the flow we obtain is not known, 
it is an open question whether Brakke's mean curvature flow coincides with
the weak solution studied in \cite{almeida-chambolle-novaga}.

To obtain the result, we use the phase-field method.
Bretin-Perrier~\cite{bretin-perrier} studied the Allen-Cahn equation 
with a penalized double well potential depending on the obstacles.
In contrast, roughly speaking, the Allen-Cahn equation 
considered in this paper (see \eqref{ac} bellow) 
is formally an approximation to the following:
\begin{equation}
v=h + gn, \qquad \text{on} \ M_t ,\ t \in (0,\infty),
\label{eq1.2}
\end{equation}
where $n$ is the outward unit normal vector of $M_t$ and $g$ is given by
\begin{equation*}
   g (x) = 
   \begin{cases}
    \frac{d}{R_0}, & \text{if} \ x \in \overline{O_+} , \\
    -\frac{d}{R_0}, & \text{if} \ x \in \overline{O_-},\\
    0, & \text{otherwise} ,
  \end{cases}
\end{equation*}
where $R_0$ is given in \eqref{eq:2.3}.
If the solution $M_t$ touches the obstacle at $x$, 
the absolute value of its mean curvature $| h(x,t) |$ is less than $\frac{d}{R_0}$, 
hence the solution cannot move into the obstacle.
Note that this argument was used in Mercier-Novaga~\cite{mercier-novaga}.
In order to use this argument in the phase-field method, 
we give an appropriate forcing term for the Allen-Cahn equation 
and show simple sub and super solutions that correspond to obstacles
(see Lemma \ref{lem4.1} below).

To obtain the convergence of the Allen-Cahn equation to the Brakke flow,
we need to prove that the Radon measure given by the energy of the Allen-Cahn equation
has good properties, such as that it converges to the mass measure of an integral varifold
(see \cite{ilmanen1993}). 
In the case of $d=2$ or $3$, 
R\"{o}ger-Sch\"{a}tzle~\cite{roger-schatzle} proved the properties under the suitable assumptions
for the energies of the Allen-Cahn equation
(this results have been used in 
\cite{liu-sato-tonegawa, mugnai-roger2008, mugnai-roger2011, sato2008, takasao2020}).
The assumption for $d$ in the main result of this paper 
comes from the use of \cite{roger-schatzle} and \cite{mugnai-roger2011}
(see Remark \ref{remark5.4} below). 

\bigskip

The organization of the paper is as follows.
In Section 2, we set out basic definitions and assumptions about the obstacles
and the initial data.
In Section 3 we introduce the Allen-Cahn equation we deal with in this paper. In addition
we also show the standard estimates for the solution.
In Section 4 we give supersolutions and subsolutions to 
the Allen-Cahn equation that are necessary to show that the solutions to
\eqref{mcf} do not intrude into the obstacles.
In Section 5 we prove the global existence of the weak solution to \eqref{mcf}
with obstacles, in the sense of Brakke.
\section{Notation and assumptions}
First we recall some notions and definitions from the geometric measure theory and refer to
\cite{allard, brakke, federer, simon, tonegawa-book} for more details.
Let $d $ be a positive integer.
For $y \in \mathbb{R}^d$ and $r>0$, we define
$B_r (y) := \{ x \in \mathbb{R}^d \, | \, |x-y| <r\}$.
We denote the space of bounded variation functions on 
$U \subset \mathbb{R}^d$ as $BV (U)$. 
For a function $\psi \in BV(U)$, 
we write the total variation measure of the distributional derivative 
$\nabla \psi$ by $\|\nabla \psi\|$. 
Let $\mu$ be a Radon measure on $\mathbb{R}^d$. 
We denote $\mu (\phi) = \int \phi \,d \mu$ for $\phi \in C_c (\mathbb{R}^d)$.
We call $\mu$ $k$-rectifiable ($1\leq k \leq d-1$) 
if $\mu$ is given by $\mu =\theta \mathscr{H}^k \lfloor _{M}$, 
where $M \subset \mathbb{R}^d$ is a $\mathscr{H}^{k}$-measurable
countably $k$-rectifiable set and $\theta \in L_{loc} ^1 (\mathscr{H}^k \lfloor _{M})$
is a positive function $\mathscr{H}^k$-a.e. on $M$.
Especially, if $\theta$ is integer-valued $\mathscr{H}^k$-a.e. on $M$, then 
we say $\mu$ is $k$-integral.
Note that if $M$ is a countably $k$-rectifiable set with locally finite and 
$\mathscr{H}^k$-measurable, then there exists 
the approximate tangent space $T_x M$ for $\mathscr{H}^k$-a.e. $x \in M$.
For $k$-dimensional subspace $S \subset \mathbb{R}^d$
and $g\in C_c ^1 (\mathbb{R}^d ; \mathbb{R}^d)$,
we denote $\divergence_{S} \, g := \sum _{i=1} ^k \nu _i \cdot \nabla _{\nu _i} g$,
where $\{ \nu _1 , \dots, \nu _k \}$ is an orthonormal basis of $S$.
For a rectifiable Radon measure $\mu= \theta \mathscr{H}^k \lfloor_{M}$, $h$ is called a generalized
mean curvature vector if
$$
\int \divergence_{T_x M} \, g \, d \mu = - \int h \cdot g \, d\mu
$$
for any $g \in C_c ^1 (\mathbb{R}^d ; \mathbb{R}^d)$.
The left-hand side is called  the first variation of $\mu$.
The weak solution to the mean curvature flow considered 
in this paper is as follows.
\begin{definition}
Let $U \subset \mathbb{R}^d$ be an open set.
A family of Radon measures $\{ \mu _t \} _{t \in [0,T)}$ on $U$
is called Brakke's mean curvature flow if
\begin{equation}
\int _U \phi \, d\mu _t \Big|_{t=t_1} ^{t_2} 
\leq 
\int _{t_1} ^{t_2} \int _U \{ (\nabla \phi -\phi h) \cdot h + \phi_t \} \, d\mu _t dt  
\label{brakke-ineq}
\end{equation}
for all $0\leq t_1 <t_2 <\infty$ and
$\phi \in C_c ^1 (U\times [0,\infty); [0,\infty))$.
Here $h$ is the generalized mean curvature vector of $\mu _t$.
Note that \eqref{brakke-ineq} is called Brakke's inequality.
\end{definition}

\bigskip

Next we state assumptions for the initial data and the obstacles.
Let $\Omega = \mathbb{T}^d= (\mathbb{R}/\mathbb{Z})^d$,
$O_{+} \subset \Omega$ and $O_{-} \subset \Omega$ be bounded open sets. 
We assume that there exist $R_0 >0$ and $R_1 >0$ such that
\begin{equation}
O_{\pm} = \bigcup _{ B_{R_0} (x) \subset O_{\pm} } B_{R_0} (x)
\qquad \text{(the interior ball condition)}
\label{eq:2.3}
\end{equation}
and $\dist (O_+,O_-)>R_1$. 
Note that if $O_{+}$ and $O_{-}$ have $C^{1,1}$ boundaries, 
then \eqref{eq:2.3} is satisfied for some $R_0 >0$ (see \cite{MR2286038}).
Let $U_0 \subset \Omega$ be a bounded open set and we denote $M _0 :=\partial U_0$.
Throughout this paper, we assume the following:
\begin{enumerate}
\item There exists $\delta_1 >0$ such that $O_+ \subset U_0$ with $\dist (O_+, M_0) >\delta _1$
and $U_0 \subset (O_-)^c$ with $\dist (O_-, M_0) >\delta_1$.
\item There exist $D_0>0$ and $R_2 \in (0,1)$ such that
\begin{equation}
\sup_{x\in \Omega, \, 0<R<R_2 }\frac{\mathscr{H}^{d-1} (M _0  \cap B_R (x)) }{\omega _{d-1}R^{d-1}} \leq D_0 \quad \text{(the upper bounds of the density)}.
\label{initialdata1}
\end{equation}
Here $\omega _{d-1}$ is a $(d-1)$-dimensional volume of the unit ball in $\mathbb{R}^{d-1}$.
\item There exists a family of open sets $\{ U _0 ^i \}_{i=1} ^\infty$ such that $U _0 ^i$ has a  $C^3$ boundary $M _0 ^i$ such that $(U _0 ,M _0)$ be approximated strongly by $\{ (U _0 ^i ,M _0 ^i) \} _{i=1} ^\infty$, that is,
\begin{equation}
\lim _{i\to \infty} \mathscr{L}^d (U _0 \triangle U_0 ^i) =0 \quad \text{and} \quad
\lim _{i \to \infty} \| \nabla \chi _{U ^i _0} \| = \|\nabla \chi _{U _0} \| \ \ \text{as measures.}
\label{initialdata2}
\end{equation}
Moreover
\begin{equation}
\dist (O_{\pm}, M_0 ^i) >\delta _1/2,\qquad \text{for any} \ i \in \N.
\label{initialdata3}
\end{equation}
\end{enumerate}
\begin{remark}\label{remark2.2}
For example, if $U _0$ is a Caccioppoli set, then \eqref{initialdata2} is satisfied (see \cite[Theorem 1.24]{giusti}). 
In addition, if $M_0$ is $C^1$, then \eqref{initialdata1} with $D_0=1+o(R_2)$ and \eqref{initialdata2} hold.
\end{remark}

\section{Allen-Cahn equation with forcing term}
In this section, we consider the Allen-Cahn equation forcing term 
and give basic energy estimates for the solution.

\bigskip

Set $W(s) =(1-s^2)^2/2$ and $q^\varepsilon (r) := \tanh (\frac{r}{\varepsilon})$ 
for $r\in \mathbb{R}$ and $\varepsilon >0$. 
Then $q^{\varepsilon}$ is a solution to
\begin{equation}
\frac{\varepsilon ( q^{\varepsilon} _r)^2 }{2} 
= \frac{W (q ^{\varepsilon})}{\varepsilon} \qquad \text{and} \qquad 
q^{\varepsilon} _{rr}= \frac{W' (q ^{\varepsilon})}{\varepsilon ^2}
\label{q}
\end{equation}
with $q^{\varepsilon}(0)=0, \ q^{\varepsilon}(\pm \infty)=\pm 1$, 
and $ q^{\varepsilon}_r (r)  >0$ for any $r\in \mathbb{R}$. 

Let  $d\geq 2$ and $\{ \varepsilon _i \}_{i=1}^\infty$ be a positive sequence with 
$\varepsilon _i \downarrow 0$ as $i\to \infty$ and $\varepsilon _i \in (0,1)$ for any $i\in \N$
(we often write $\varepsilon _i$ as $\varepsilon$ for simplicity). 
For $U _0^i \subset \Omega$ we define a periodic function $r _0 ^{\varepsilon_i}$ by
\[
   r _0 ^{\varepsilon_i}(x)= \begin{cases}
    \dist(x,M_0 ^i), & \text{if} \ x\in U _0^i ,\\
    -\dist(x,M_0 ^i), & \text{if} \ x\notin U _0 ^i.
  \end{cases}
\]
We remark that $|\nabla r _0 ^{\varepsilon_i} |\leq 1$ a.e. $x\in \Omega$ 
and $r _ 0 ^{\varepsilon_i}$ is smooth near $M _0 ^i$. 
Let $\tilde r _0 ^{\varepsilon_i} \in C^\infty (\Omega)$ be a smoothing of $r_0 ^{\varepsilon_i}$ with 
$|\nabla\tilde r _0 ^{\varepsilon_i} |\leq 1$ and 
$|\nabla ^2 \tilde r _0 ^{\varepsilon_i} |\leq \varepsilon _i ^{-1}$ in $\Omega$, 
and $\tilde r_0 ^{\varepsilon_i} =r _0 ^{\varepsilon_i}$ near $M _0 ^i$.

Define
\begin{equation}
\varphi ^{\varepsilon_i }_0(x):=q^{\varepsilon_i } ( \tilde r_0 ^{\varepsilon_i} (x) ), 
\quad i\geq 1.
\label{initial}
\end{equation}

Let $g^{\varepsilon_i} \in C^\infty  (\Omega)$ be a smooth function such that
\begin{equation}
   g^{\varepsilon _i} (x) = 
   \begin{cases}
    \frac{d}{R_0}, & \text{if} \ \dist (x,O_+) \leq \sqrt{\varepsilon_i} , \\
    -\frac{d}{R_0}, & \text{if} \ \dist (x,O_-) \leq \sqrt{\varepsilon_i},\\
    0, & \text{if} \ \min \{ \dist (x,O_+), \dist (x,O_-) \} \geq 
    2\sqrt{\varepsilon_i},
  \end{cases}
  \label{def-g}
\end{equation}
with $\max _{x \in \Omega} |g^{\varepsilon _i} (x)| \leq \frac{d}{R_0}$, 
$\max _{x \in \Omega} |\nabla g^{\varepsilon _i} (x)| \leq M \varepsilon ^{-1} _i$, and
$\max _{x \in \Omega} |\nabla ^2 g^{\varepsilon _i} (x)| \leq M \varepsilon ^{-2} _i$,
for any $i \in \N$, where $M>0$ is independent of $i$.
To define $g^{\varepsilon_i}$, we may assume that 
$2\sqrt{\varepsilon_i} \leq \frac{R_1}{3}$ for any $i\in \N$, if necessary.

In this paper, we consider the following 
Allen-Cahn equation:
\begin{equation}
\left\{ 
\begin{array}{ll}
\varepsilon_i \varphi ^{\varepsilon_i}_t 
=\varepsilon_i \Delta \varphi ^{\varepsilon_i } 
-\dfrac{W' (\varphi ^{\varepsilon_i })}{\varepsilon_i } 
+ g^{\varepsilon_i} \sqrt{2W(\varphi ^{\varepsilon _i})} ,
& (x,t)\in \Omega \times (0,\infty),  \\
\varphi ^{\varepsilon_i} (x,0) = \varphi _0 ^{\varepsilon_i} (x) ,  &x\in \Omega.
\end{array} \right.
\label{ac}
\end{equation}

\begin{remark}\label{rem3.1}
The definition of the initial data \eqref{initial} implies 
$\max_{x\in \Omega} |\varphi ^\varepsilon _0 (x)| <1 $.
Therefore we have $\sup_{x\in \Omega , t \in [0,T)} |\varphi ^\varepsilon (x,t)| <1 $ 
for the solution $\varphi ^\varepsilon$ to \eqref{ac} and $T>0$, 
by the maximum principle (see \cite{MR0762825}). We give the proof in Proposition \ref{prop3.4}.
Note that the function $\sqrt{2W (\varphi ^\varepsilon)}$ is important in the proof.
By $ |\varphi ^\varepsilon | <1$, we can define $r^\varepsilon=r^\varepsilon (x,t)$ 
by $\varphi ^\varepsilon (x,t) = q ^{\varepsilon} (r ^\varepsilon (x,t))$,
that is, $r ^\varepsilon (x,t) = (q ^{\varepsilon}) ^{-1} (\varphi ^\varepsilon (x,t))$.
\end{remark}

\begin{remark}
\eqref{ac} corresponds to the mean curvature flow
with forcing term \eqref{eq1.2} (see \cite{mugnai-roger2011, soner1995, takasao2020}).
Not only for $|\varphi ^\varepsilon|<1$,
we also need $\sqrt{2W(\varphi ^{\varepsilon})}$ 
to simplify the forcing term when we rewrite \eqref{ac} as an PDE of $r^\varepsilon$
(see \eqref{ac2} below). Furthermore, if we adopt $g ^\varepsilon$ instead of 
$g^\varepsilon \sqrt{2W(\varphi ^{\varepsilon})}$ in \eqref{ac}, 
then the calculation of \eqref{eq5.2} below will fail.
In the case of $g^\varepsilon \equiv 0$, the convergence of \eqref{ac}
to the mean curvature flow with no obstacles is well known
(see \cite{bronsard-kohn, xchen1992, evans-soner-souganidis, ilmanen1993}).
\end{remark}

Here we give the standard pointwise estimate for the solution to \eqref{ac}.

\begin{proposition}\label{prop3.4}
Let $\varphi^\varepsilon $ be a solution to \eqref{ac}.
Then  $\sup_{x\in \Omega , t \in [0,T)} |\varphi ^\varepsilon (x,t)| <1 $ for any $T>0$.
\end{proposition}

\begin{proof}
We only show $\sup_{x \in\Omega ,t \in [0,T)} \varphi ^\varepsilon (x,t) <1$,
because we can obtain $\varphi ^\varepsilon >-1$ similarly. By \eqref{initial}, we have $\sup _{x\in\Omega} |\varphi_0 ^\varepsilon (x) |<1$.
Assume that $\{ t \in [0,T) \, | \, \exists \, x \in \Omega \ \text{s.t.} \ \varphi ^\varepsilon (x,t)=1 \} \not =\emptyset$ and set
$t_0 :=\inf \{ t \in [0,T) \, | \, \exists \, x \in \Omega \ \text{s.t.} \ \varphi ^\varepsilon (x,t)=1 \}$.
Then $t_0 \in (0,T)$ by $\sup _{x\in\Omega} |\varphi_0 ^\varepsilon (x) |<1$ and $\varphi ^\varepsilon$ satisfies
\begin{equation}
\begin{split}
\varphi ^\varepsilon 
\leq & 
\Delta \varphi ^\varepsilon + \left| \frac{2 \varphi ^\varepsilon }{\varepsilon ^2} (1+\varphi ^\varepsilon) \right| (1-\varphi ^\varepsilon)
+ \left| \frac{g}{\varepsilon} (1+\varphi^\varepsilon) \right| (1-\varphi ^\varepsilon)\\
\leq & \Delta \varphi ^\varepsilon + M (1-\varphi ^\varepsilon)
\end{split}
\label{comparison2}
\end{equation}
for any $(x,t) \in \Omega \times (0,t_0)$, where 
$\displaystyle M= \max _{x \in \Omega , t \in [0,T]} \left\{ \left| \frac{2 \varphi ^\varepsilon }{\varepsilon ^2} (1+\varphi ^\varepsilon) \right|
+ \left| \frac{g}{\varepsilon} (1+\varphi^\varepsilon) \right| \right\}$
and we used $1-\varphi ^\varepsilon \geq 0$ in $\Omega \times (0,t_0)$.
We denote $\alpha := \max_{x \in \Omega} \varphi ^\varepsilon _0 (x)$. Note that $\alpha <1$.
Set $\overline \varphi (t) := 1- (1- \alpha ) e^{-Mt}$. Then $\overline \varphi$ 
is monotone increasing and satisfies
\begin{equation}
\overline \varphi _t = \Delta \overline \varphi + M (1-\overline \varphi)
\label{comparison}
\end{equation}
and $\overline \varphi (0) =\alpha \geq \varphi^\varepsilon _0 (x)$ for any $x \in \Omega$.
We remark that $\varphi^\varepsilon $ is a subsolution to \eqref{comparison} in $\Omega \times (0,t_0)$ by \eqref{comparison2}.
Therefore the comparison principle implies 
$\varphi^\varepsilon (x,t) \leq \overline \varphi (t) \leq \overline \varphi (T) <1$ 
for any $(x,t) \in \Omega \times [0,t_0)$. Then we would have a contradiction from 
$\varphi ^\varepsilon (x,t_0) =1$ for some $x \in \Omega$.
Therefore $\varphi ^\varepsilon (x,t) <1$ for any $(x,t) \in \Omega \times [0,T)$
and $\varphi ^\varepsilon$ satisfies \eqref{comparison2} in $\Omega \times (0,T)$.
Using the comparison principle again, we obtain 
$\sup_{x\in \Omega , t \in [0,T)} |\varphi ^\varepsilon (x,t)| \leq \overline \varphi (T)<1$.
\end{proof}

Next we define the measures that correspond to the surface $M_t$ in Section 1.

\begin{definition}\label{def-mu}
Set $\sigma := \int_{-1} ^1 \sqrt{2W(s)} \, ds $. Assume that 
$\varphi ^{\varepsilon_i}$ is a solution to \eqref{ac}. 
We denote Radon measures $\mu _t ^{\varepsilon_i}$, $\tilde \mu _t ^{\varepsilon_i}$,
 and $\hat \mu _t ^{\varepsilon_i}$ by
\begin{equation}
\mu _t ^{\varepsilon_i}(\phi) := \frac{1}{\sigma}\int _{\Omega} 
\phi(x) \Big( \frac{\varepsilon_i |\nabla \varphi ^{\varepsilon_i} (x,t)|^2}{2} 
+ \frac{W (\varphi ^{\varepsilon_i} (x,t))}{\varepsilon_i} \Big) dx,
\label{eq5.1}
\end{equation}
\begin{equation*}
\tilde \mu _t ^{\varepsilon_i}(\phi) := \frac{1}{\sigma}\int _{\Omega} \phi(x) 
\varepsilon_i |\nabla \varphi ^{\varepsilon_i} (x,t)|^2 dx,
\quad \text{and} \quad
\hat \mu _t ^{\varepsilon_i}(\phi) := \frac{1}{\sigma}\int _{\Omega} \phi(x) \frac{2W (\varphi ^{\varepsilon_i} (x,t))}{\varepsilon_i} dx
\end{equation*}
for any $\phi \in C_c (\Omega)$.
\end{definition}

\begin{remark}\label{rem4.2}
If there exist $t\geq 0$ and a Radon measure $\mu _t$ on $\Omega$
such that 
\begin{equation}
\int _\Omega \left | \frac{\varepsilon  |\nabla \varphi  ^{\varepsilon } (x,t) |^2}{2} 
-\frac{W(\varphi  ^{\varepsilon } (x,t))}{\varepsilon} \right | \, dx \to 0
\label{eq5.3}
\end{equation} 
and $\mu _t ^\varepsilon \to \mu _t $ as Radon measures, namely, 
$$
\int _\Omega \phi \, d\mu _t ^{\varepsilon} \to \int _\Omega \phi \, d\mu _t, 
\qquad \text{for any} \ \phi \in C_c (\Omega),
$$
then $\tilde \mu _t ^\varepsilon$ and $\hat \mu _t ^\varepsilon$ also converge to $\mu _t$ as Radon measures.
\end{remark}

By the definition of the initial data $\varphi _0 ^\varepsilon$, we obtain
\begin{proposition}[Proposition 1.4 of \cite{ilmanen1993}]
\label{prop3.2}
$\sup _{i \in \N} \mu _0 ^{\varepsilon_i} (\Omega ) <\infty$.
Moreover $\mu _0 ^{\varepsilon_i} \to \mathscr{H}^{d-1} \lfloor_{M_0}$ as Radon measures.
\end{proposition}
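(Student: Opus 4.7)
The plan is to leverage the equipartition identity \eqref{q}. Since $\varphi_0^{\varepsilon_i} = q^{\varepsilon_i}(\tilde r_0^{\varepsilon_i})$, the chain rule together with \eqref{q} gives the pointwise identity
$$\frac{\varepsilon_i |\nabla \varphi_0^{\varepsilon_i}|^2}{2} + \frac{W(\varphi_0^{\varepsilon_i})}{\varepsilon_i} = \frac{W(q^{\varepsilon_i}(\tilde r_0^{\varepsilon_i}))}{\varepsilon_i}\bigl(|\nabla \tilde r_0^{\varepsilon_i}|^2 + 1\bigr),$$
which, by $|\nabla \tilde r_0^{\varepsilon_i}| \leq 1$, is bounded above by $2 W(q^{\varepsilon_i}(\tilde r_0^{\varepsilon_i}))/\varepsilon_i$. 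The explicit form $W(q^{\varepsilon_i}(r)) = \tfrac{1}{2}\mathrm{sech}^4(r/\varepsilon_i)$ shows this integrand concentrates exponentially near the zero set $\{\tilde r_0^{\varepsilon_i} = 0\} = M_0^i$, so that all of the mass is captured by a shrinking tubular neighborhood.

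For the uniform bound, I would fix $\delta_i > 0$ small enough that $\tilde r_0^{\varepsilon_i} = r_0^{\varepsilon_i}$ is the exact signed distance on $N_{\delta_i} := \{|\tilde r_0^{\varepsilon_i}| < \delta_i\}$ (so $|\nabla \tilde r_0^{\varepsilon_i}| = 1$ there) yet large enough relative to $\varepsilon_i$ that $\delta_i/\varepsilon_i \to \infty$. Outside $N_{\delta_i}$, the integrand is dominated by $e^{-c\delta_i/\varepsilon_i}$ and contributes $o(1)$. Inside, the coarea formula yields
$$\int_{N_{\delta_i}} \frac{W(q^{\varepsilon_i}(\tilde r_0^{\varepsilon_i}))}{\varepsilon_i}\,dx = \int_{-\delta_i}^{\delta_i} \frac{W(q^{\varepsilon_i}(s))}{\varepsilon_i}\,\mathscr{H}^{d-1}\bigl(\{\tilde r_0^{\varepsilon_i} = s\}\bigr)\,ds,$$
and the tubular Jacobian estimate for the $C^3$ hypersurface $M_0^i$ bounds the level-set area by $(1+C(M_0^i)\delta_i)\mathscr{H}^{d-1}(M_0^i)$. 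The change of variables $s = \varepsilon_i u$ together with the exact computation $\int_{\mathbb{R}} \tfrac{1}{2}(1-\tanh^2 u)^2\,du = \tfrac{2}{3} = \sigma/2$ gives
$$\mu_0^{\varepsilon_i}(\Omega) \leq (1 + o(1))\,\mathscr{H}^{d-1}(M_0^i),$$
and the right-hand side is controlled uniformly in $i$ by the strong BV convergence \eqref{initialdata2}, which entails $\mathscr{H}^{d-1}(M_0^i) = \|\nabla\chi_{U_0^i}\|(\Omega) \to \|\nabla\chi_{U_0}\|(\Omega)$.

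For the convergence of Radon measures, I would test against $\phi \in C_c(\Omega)$ and rerun the same coarea argument with $\phi$ inserted. Continuity of $\phi$ and the fact that the level sets $\{\tilde r_0^{\varepsilon_i} = s\}$ collapse onto $M_0^i$ as $|s| \to 0$ (via the normal projection along $\nabla r_0^{\varepsilon_i}$) yield
$$\int_\Omega \phi\,d\mu_0^{\varepsilon_i} = \int_{M_0^i}\phi\,d\mathscr{H}^{d-1} + o(1),$$
and then \eqref{initialdata2} passes the right-hand side to $\int \phi\,d\|\nabla\chi_{U_0}\| = \int_{M_0}\phi\,d\mathscr{H}^{d-1}$. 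The main obstacle is the coupling of the two limits $\varepsilon_i \downarrow 0$ and $M_0^i \to M_0$: the tubular radius of $M_0^i$ and the neighborhood on which the smoothing $\tilde r_0^{\varepsilon_i}$ coincides with $r_0^{\varepsilon_i}$ both depend on $i$ and may degenerate. The remedy is the diagonal choice of $\delta_i$ above, slow enough to lie inside both neighborhoods yet with $\delta_i/\varepsilon_i \to \infty$ so the exterior exponential tail is negligible; once this balance is set, the remaining estimates are routine dominated convergence and coarea.
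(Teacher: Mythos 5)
The paper gives no proof here, deferring to Ilmanen's Proposition 1.4, and your reconstruction (equipartition of the energy density via \eqref{q}, coarea in a shrinking tubular neighborhood with change of variables $s = \varepsilon_i u$, the tubular Jacobian bound, and the strong BV convergence \eqref{initialdata2}) is exactly the argument behind that citation. The coupling issue you flag at the end — that the focal radius of $M_0^i$ and the coincidence region of the smoothing must both be $\gg \varepsilon_i$ — is real, but it is resolved by the standard implicit diagonalization (after fixing $\{M_0^i\}$, take $\varepsilon_i$ small enough relative to the geometry of $M_0^i$, which the constraint $|\nabla^2\tilde r_0^{\varepsilon_i}|\leq\varepsilon_i^{-1}$ already partly encodes), after which your choice of $\delta_i$ with $\varepsilon_i\ll\delta_i\ll\rho_i$ is precisely correct.
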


Set $D_1 = \sup _{i \in \N} \mu _0 ^{\varepsilon_i} (\Omega )$. 
Proposition \ref{prop3.2} implies $D_1 <\infty$.
The integration by parts implies the following standard estimates: 
\begin{proposition}\label{prop3.3}
Let $\varphi ^{\varepsilon}$ be a solution to \eqref{ac}. Then we have
\begin{equation}
\frac{d}{dt} \int _\Omega \frac{\varepsilon  |\nabla \varphi  ^{\varepsilon } |^2}{2} 
+\frac{W(\varphi  ^{\varepsilon })}{\varepsilon} \, dx
+ \frac12 \int _\Omega \varepsilon \left( -\Delta \varphi ^{\varepsilon }  
+\frac{W' (\varphi ^{\varepsilon})}{\varepsilon^2} \right)^2 \, dx
\leq \frac{ d^2 }{R_0 ^2} \int _\Omega  
\frac{W(\varphi  ^{\varepsilon })}{\varepsilon} \, dx,
\label{eq3.7}
\end{equation}
\begin{equation}
\frac{d}{dt} \int _\Omega \frac{\varepsilon  |\nabla \varphi  ^{\varepsilon } |^2}{2} 
+\frac{W(\varphi  ^{\varepsilon })}{\varepsilon} \, dx
+ \frac12 \int _\Omega \varepsilon (\varphi  ^{\varepsilon } _t)^2 \, dx
\leq \frac{ d^2 }{R_0 ^2} \int _\Omega  
\frac{W(\varphi  ^{\varepsilon })}{\varepsilon} \, dx,
\label{eq3.9}
\end{equation}
and
\begin{equation}
\int _\Omega \frac{\varepsilon  |\nabla \varphi  ^{\varepsilon } (x,t) |^2}{2} 
+\frac{W(\varphi  ^{\varepsilon } (x,t))}{\varepsilon} \, dx
\leq D_1 e^ {\frac{ d^2 }{R_0 ^2} t}.
\label{eq3.8}
\end{equation}

\end{proposition}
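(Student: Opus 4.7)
The plan is to carry out a standard energy estimate, treating the forcing term by Young's inequality in two slightly different ways to obtain \eqref{eq3.7} and \eqref{eq3.9}, and then applying Gronwall's inequality to obtain \eqref{eq3.8}.

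First, I would define the energy $E(t) := \int_\Omega \bigl(\varepsilon |\nabla \varphi^\varepsilon|^2/2 + W(\varphi^\varepsilon)/\varepsilon\bigr)\,dx$ and differentiate in $t$. Integration by parts (using periodicity of $\Omega = \mathbb{T}^d$) yields
\begin{equation*}
\frac{d}{dt} E(t) = \int_\Omega \Bigl(-\varepsilon \Delta \varphi^\varepsilon + \frac{W'(\varphi^\varepsilon)}{\varepsilon}\Bigr) \varphi^\varepsilon_t \, dx.
\end{equation*}
Now I would substitute from the PDE \eqref{ac}, which reads $\varepsilon \varphi^\varepsilon_t = \varepsilon \Delta \varphi^\varepsilon - W'(\varphi^\varepsilon)/\varepsilon + g^\varepsilon \sqrt{2W(\varphi^\varepsilon)}$, i.e.\ $-\varepsilon \Delta \varphi^\varepsilon + W'(\varphi^\varepsilon)/\varepsilon = -\varepsilon \varphi^\varepsilon_t + g^\varepsilon \sqrt{2W(\varphi^\varepsilon)}$. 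This turns the right-hand side into $-\int_\Omega \varepsilon (\varphi^\varepsilon_t)^2\,dx + \int_\Omega g^\varepsilon \sqrt{2W(\varphi^\varepsilon)}\,\varphi^\varepsilon_t\,dx$. Applying Young's inequality $ab \leq \varepsilon a^2/2 + b^2/(2\varepsilon)$ to the second term with $a = \varphi^\varepsilon_t$ and $b = g^\varepsilon\sqrt{2W(\varphi^\varepsilon)}$, using $|g^\varepsilon| \leq d/R_0$, gives
\begin{equation*}
\int_\Omega g^\varepsilon \sqrt{2W(\varphi^\varepsilon)}\,\varphi^\varepsilon_t\,dx \leq \frac{\varepsilon}{2}\int_\Omega (\varphi^\varepsilon_t)^2\,dx + \frac{d^2}{R_0^2}\int_\Omega \frac{W(\varphi^\varepsilon)}{\varepsilon}\,dx,
\end{equation*}
which rearranges to \eqref{eq3.9}.

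For \eqref{eq3.7}, I would keep the quantity $A := -\Delta \varphi^\varepsilon + W'(\varphi^\varepsilon)/\varepsilon^2$ as the basic object, so that $\frac{d}{dt}E(t) = \int_\Omega \varepsilon A \varphi^\varepsilon_t\,dx$, and use \eqref{ac} in the form $\varphi^\varepsilon_t = -A + g^\varepsilon\sqrt{2W(\varphi^\varepsilon)}/\varepsilon$. This gives
\begin{equation*}
\frac{d}{dt}E(t) = -\int_\Omega \varepsilon A^2\,dx + \int_\Omega A \, g^\varepsilon \sqrt{2W(\varphi^\varepsilon)}\,dx,
\end{equation*}
and Young's inequality applied to the second term absorbs half of $\int \varepsilon A^2\,dx$, leaving the residual $\frac{d^2}{R_0^2}\int W(\varphi^\varepsilon)/\varepsilon\,dx$, which is precisely \eqref{eq3.7}.

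Finally, \eqref{eq3.8} follows by discarding the nonnegative dissipation term in either \eqref{eq3.7} or \eqref{eq3.9}, bounding $\int W(\varphi^\varepsilon)/\varepsilon\,dx \leq E(t)$, and invoking Gronwall with the initial bound $E(0) \leq D_1$ from Proposition \ref{prop3.2}. I do not anticipate a genuine obstacle here; the only slightly delicate point is recognizing that the factor $\sqrt{2W(\varphi^\varepsilon)}$ in the forcing is exactly what makes Young's inequality produce the clean factor $\|g^\varepsilon\|_\infty^2 \leq d^2/R_0^2$ on $\int W/\varepsilon\,dx$, which is presumably the motivation for including it in the Allen--Cahn equation as noted in the preceding remark.
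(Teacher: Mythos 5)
Your proof is correct and follows essentially the same route as the paper: differentiate the energy, integrate by parts, substitute the PDE, and apply Young's inequality with the weight $\varepsilon$, finishing with Gronwall for \eqref{eq3.8}. The only cosmetic difference is that you write out both substitution choices (replacing $\varphi^\varepsilon_t$ for \eqref{eq3.9} versus replacing $-\Delta\varphi^\varepsilon + W'/\varepsilon^2$ for \eqref{eq3.7}) explicitly, whereas the paper does \eqref{eq3.7} in detail and dispatches \eqref{eq3.9} with ``similarly.''
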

\begin{proof}
By the integration by parts and Young's inequality, we have
\begin{equation*}
\begin{split}
&\frac{d}{dt} \int _\Omega \frac{\varepsilon  |\nabla \varphi  ^{\varepsilon } |^2}{2} 
+\frac{W(\varphi  ^{\varepsilon })}{\varepsilon} \, dx
=\int _\Omega \varepsilon \left( -\Delta \varphi ^{\varepsilon }  
+\frac{W' (\varphi ^{\varepsilon})}{\varepsilon^2} \right) \varphi ^\varepsilon _t \, dx\\
=&\int _\Omega \varepsilon \left( -\Delta \varphi ^{\varepsilon }  
+\frac{W' (\varphi ^{\varepsilon})}{\varepsilon^2} \right) 
\left( \Delta \varphi ^{\varepsilon }  
-\frac{W' (\varphi ^{\varepsilon})}{\varepsilon^2} 
+ g^\varepsilon \frac{\sqrt{2W (\varphi ^{\varepsilon}) } }{\varepsilon}
\right) 
\, dx \\
=&-\int _\Omega \varepsilon \left( -\Delta \varphi ^{\varepsilon }  
+\frac{W' (\varphi ^{\varepsilon})}{\varepsilon^2} \right) ^2
\, dx
+
 \int _\Omega \varepsilon \left( -\Delta \varphi ^{\varepsilon }  
+\frac{W' (\varphi ^{\varepsilon})}{\varepsilon^2} \right) 
g^\varepsilon \frac{\sqrt{2W (\varphi ^{\varepsilon}) } }{\varepsilon}
\, dx\\
\leq & - \frac12 \int _\Omega \varepsilon \left( -\Delta \varphi ^{\varepsilon }  
+\frac{W' (\varphi ^{\varepsilon})}{\varepsilon^2} \right) ^2
\, dx
+
\int _\Omega (g^\varepsilon)^2 \frac{W (\varphi ^{\varepsilon})  }{\varepsilon}
\, dx.
\end{split}
\end{equation*}
By this and $\sup_{x \in \Omega} |g| \leq \frac{d}{R_0}$ we obtain \eqref{eq3.7} and \eqref{eq3.8}.
Similarly we can obtain \eqref{eq3.9}.
\end{proof}

\bigskip

Next we show the monotonicity formula. Set
\[
\rho _{y,s} (x,t) = \frac{1}{(4\pi (s-t))^{\frac{d-1}{2}}} e^{-\frac{|x-y|^2 }{4(s-t)}}, 
\qquad
x,y \in \mathbb{R}^d, \ 0 \leq t < s<\infty. 
\]
Similar to the proof in \cite[p.\,2028]{takasao2017}, we obtain
the following monotonicity formula.
\begin{proposition}
Let $\varphi ^{\varepsilon_i}$ be a solution to \eqref{ac} with initial data 
$\varphi ^{\varepsilon _i}$ which satisfies \eqref{initial}
and $\mu _t ^{\varepsilon_i}$ be a Radon measure defined in \eqref{eq5.1}.
Then we have
\begin{equation}
\begin{split}
\frac{d}{dt} \int _{\mathbb{R}^d} \rho _{y,s} (x,t) \, d \mu _t ^\varepsilon (x)
\leq & \,
\frac{1}{2(s-t)} \int _{\mathbb{R}^d} \rho _{y,s} (x,t) 
\left( \frac{\varepsilon  |\nabla \varphi  ^{\varepsilon } (x,t) |^2}{2} 
-\frac{W(\varphi  ^{\varepsilon } (x,t))}{\varepsilon} \right) \, dx \\
& \, +\frac{d^2}{2R_0 ^2} 
\int _{\mathbb{R}^d} \rho _{y,s} (x,t) \, d \mu _t ^\varepsilon (x) .
\end{split}
\label{eq3.10}
\end{equation}
Here, $\mu _t ^\varepsilon$ is extended periodically to $\mathbb{R}^d$.
\end{proposition}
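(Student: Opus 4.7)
The plan is to carry out the standard Ilmanen-type computation of $\tfrac{d}{dt}\int \rho_{y,s}\,d\mu_t^\varepsilon$ (as in \cite{takasao2017}) and to absorb the additional contribution of the forcing $g^\varepsilon\sqrt{2W(\varphi^\varepsilon)}$ into a nonpositive perfect square by Young's inequality. I write $e^\varepsilon := \tfrac{\varepsilon|\nabla\varphi^\varepsilon|^2}{2} + \tfrac{W(\varphi^\varepsilon)}{\varepsilon}$ and $\xi^\varepsilon := \tfrac{\varepsilon|\nabla\varphi^\varepsilon|^2}{2} - \tfrac{W(\varphi^\varepsilon)}{\varepsilon}$, so that $\sigma\, d\mu_t^\varepsilon = e^\varepsilon\,dx$.

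Differentiating $\int \rho_{y,s}\, e^\varepsilon\,dx$ by the product rule splits into a piece where $\partial_t$ hits $e^\varepsilon$ and one where it hits $\rho_{y,s}$. For the first, I would integrate by parts in $x$ and use \eqref{ac} to substitute $\varepsilon\Delta\varphi^\varepsilon - W'(\varphi^\varepsilon)/\varepsilon = \varepsilon\varphi^\varepsilon_t - g^\varepsilon\sqrt{2W(\varphi^\varepsilon)}$, producing the quadratic $-\int\rho_{y,s}\varepsilon(\varphi^\varepsilon_t)^2$, a cross term $-\int\varepsilon\varphi^\varepsilon_t\nabla\rho_{y,s}\cdot\nabla\varphi^\varepsilon$, and a forcing contribution $\int\rho_{y,s}\varphi^\varepsilon_t g^\varepsilon\sqrt{2W(\varphi^\varepsilon)}$. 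For the second piece I use the $(d-1)$-dimensional backward-heat identity $\partial_t\rho_{y,s}+\Delta\rho_{y,s}= -\rho_{y,s}/(2(s-t))$, integrate by parts against $e^\varepsilon$, and apply the Bochner-type identity $\int\varepsilon\nabla\rho\cdot D^2\varphi\nabla\varphi = -\tfrac{\varepsilon}{2}\int|\nabla\varphi|^2\Delta\rho$; substituting \eqref{ac} once more into the resulting $\int(W'(\varphi^\varepsilon)/\varepsilon)\nabla\rho_{y,s}\cdot\nabla\varphi^\varepsilon$ piece yields a second copy of the cross term together with an additional forcing term $\int g^\varepsilon\sqrt{2W(\varphi^\varepsilon)}\nabla\rho_{y,s}\cdot\nabla\varphi^\varepsilon$.

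The main step is then to complete the square. The quadratic together with the two cross terms reassembles as $-\int\rho_{y,s}\varepsilon\bigl(\varphi^\varepsilon_t + \nabla\rho_{y,s}\cdot\nabla\varphi^\varepsilon/\rho_{y,s}\bigr)^2 + \int \varepsilon(\nabla\rho_{y,s}\cdot\nabla\varphi^\varepsilon)^2/\rho_{y,s}$. Using the Hessian remainder $-\int\varepsilon\nabla\varphi^\varepsilon\cdot D^2\rho_{y,s}\nabla\varphi^\varepsilon$ and the Gaussian identities $\nabla\rho_{y,s}/\rho_{y,s} = -(x-y)/(2(s-t))$ and $D^2\rho_{y,s}/\rho_{y,s} = -I/(2(s-t)) + (x-y)\otimes(x-y)/(4(s-t)^2)$, the non-square remainder collapses to $\tfrac{1}{2(s-t)}\int\rho_{y,s}\varepsilon|\nabla\varphi^\varepsilon|^2$; combined with the $-\tfrac{1}{2(s-t)}\int\rho_{y,s}e^\varepsilon$ coming from $\partial_t\rho_{y,s}$ this gives exactly the discrepancy $\tfrac{1}{2(s-t)}\int\rho_{y,s}\xi^\varepsilon$.

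Finally, the combined forcing $\int g^\varepsilon\sqrt{2W(\varphi^\varepsilon)}\,\rho_{y,s}\bigl(\varphi^\varepsilon_t + \nabla\rho_{y,s}\cdot\nabla\varphi^\varepsilon/\rho_{y,s}\bigr)$ is estimated by Young's inequality with parameter $\alpha=2$: the $b^2$ half exactly equals the nonpositive perfect square and is absorbed and dropped, while the $a^2/(2\alpha)$ half is bounded by $\tfrac{d^2}{2R_0^2}\int\rho_{y,s}W(\varphi^\varepsilon)/\varepsilon \leq \tfrac{d^2}{2R_0^2}\int\rho_{y,s}e^\varepsilon$ using $|g^\varepsilon|\leq d/R_0$ and $W(\varphi^\varepsilon)/\varepsilon\leq e^\varepsilon$, producing the final term in \eqref{eq3.10}. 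The main obstacle is the square-completion bookkeeping, in which the Hessian- and Gaussian-derived terms must cancel exactly against $\int(\nabla\rho_{y,s}\cdot\nabla\varphi^\varepsilon)^2/\rho_{y,s}$; beyond this, the computation is routine integration by parts, and the specific choice of the $\sqrt{2W(\varphi^\varepsilon)}$ multiplier in the forcing is precisely what makes the Young absorption match the perfect square.
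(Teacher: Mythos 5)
Your computation is essentially the paper's own (the paper simply cites \cite{takasao2017} for this calculation): differentiate $\int\rho_{y,s}\,e^\varepsilon\,dx$ where $e^\varepsilon=\tfrac{\varepsilon|\nabla\varphi^\varepsilon|^2}{2}+\tfrac{W(\varphi^\varepsilon)}{\varepsilon}$, integrate by parts using the backward-heat identity and the Gaussian structure of $\rho_{y,s}$, complete the square in $\varphi^\varepsilon_t+\nabla\rho_{y,s}\cdot\nabla\varphi^\varepsilon/\rho_{y,s}$, and absorb the forcing by Young together with $|g^\varepsilon|\le d/R_0$ and $W(\varphi^\varepsilon)/\varepsilon\le e^\varepsilon$; all of this is correct. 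One bookkeeping caveat worth noting: since $d\mu_t^\varepsilon=\sigma^{-1}e^\varepsilon\,dx$ but the discrepancy term in \eqref{eq3.10} is written against $dx$ rather than $d\mu_t^\varepsilon$, dividing your inequality through by $\sigma$ yields $\tfrac{1}{2\sigma(s-t)}\int\rho_{y,s}\,\xi^\varepsilon\,dx$ in place of $\tfrac{1}{2(s-t)}\int\rho_{y,s}\,\xi^\varepsilon\,dx$, and since $\sigma=4/3>1$ and $\xi^\varepsilon$ has no fixed sign these are not literally comparable; this does not affect the only use of \eqref{eq3.10} in the paper (passing to \eqref{eq5.13} via \eqref{eq5.12}, where the $\xi^\varepsilon$ contribution vanishes), but you should be aware the normalization constant as stated does not fall out of the computation exactly.
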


\bigskip

For the solution $\varphi ^\varepsilon$ to \eqref{ac},
under the parabolic change of variables $\tilde x =\frac{x}{\varepsilon}$ 
and $\tilde t = \frac{t}{\varepsilon^2}$, we have
\begin{equation}
\tilde \varphi ^{\varepsilon}_{\tilde t} 
=\Delta_{\tilde x} \tilde \varphi ^{\varepsilon} 
-W' (\tilde \varphi ^{\varepsilon})
+ \varepsilon \tilde g^{\varepsilon} \sqrt{2W(\tilde \varphi ^{\varepsilon})}, 
\quad \tilde x \in \Omega_\varepsilon, \ \tilde t>0,
\label{eq6.1}
\end{equation}
where $\Omega _\varepsilon =( \mathbb{R} /\varepsilon ^{-1} \mathbb{Z})^d $, 
$\tilde \varphi ^\varepsilon (\tilde x,\tilde t) = \varphi ^\varepsilon (x,t)$, 
$\tilde g ^\varepsilon (\tilde x,\tilde t) = g ^\varepsilon (x,t)$, 
and 
\begin{equation}
\| \tilde g^{\varepsilon }\|_{L^\infty} \leq \frac{d}{R_0}, 
\quad
\| \nabla_{\tilde x} \tilde g^{\varepsilon} \| _{L^\infty} \leq M,
\quad
\| \nabla_{\tilde x} ^2 \tilde g^{\varepsilon } \|_{L^\infty} \leq M.
\label{eq6.2}
\end{equation}
Therefore, the external force term 
$\varepsilon \tilde g^{\varepsilon} \sqrt{2W( \tilde \varphi ^{\varepsilon})}$
can be regarded as a small perturbation and 
we can obtain the following:
\begin{lemma}
For the solution $\varphi^\varepsilon$ to \eqref{ac}, there exists a constant $c>0$ depending
only on $d,\,M,\,D_1,$ and $T$ such that
\begin{equation}
\sup _{\Omega \times [\varepsilon ^2 ,T)} \varepsilon |\nabla \varphi ^\varepsilon | \leq c
\quad
\text{and}
\quad
\sup _{\Omega \times [\varepsilon ^2 ,T)} \varepsilon^2 |\nabla^2 \varphi ^\varepsilon | \leq c
\label{eq6.3}
\end{equation}
for any $\varepsilon >0$.
\end{lemma}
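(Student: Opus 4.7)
The plan is to translate \eqref{ac} into the parabolically rescaled coordinates $(\tilde x,\tilde t)=(x/\varepsilon,t/\varepsilon^2)$, where the equation takes the form \eqref{eq6.1} of a standard semilinear heat equation, and then invoke interior parabolic regularity on cylinders of unit rescaled radius. Since those interior estimates are dimensionless in the rescaled picture, undoing the scaling $\nabla_x=\varepsilon^{-1}\nabla_{\tilde x}$ and $\nabla_x^2=\varepsilon^{-2}\nabla_{\tilde x}^2$ will produce exactly the weights $\varepsilon$ and $\varepsilon^2$ appearing in \eqref{eq6.3}.

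First I would check that the forcing $-W'(\tilde\varphi^\varepsilon)+\varepsilon\tilde g^\varepsilon\sqrt{2W(\tilde\varphi^\varepsilon)}$ on the right of \eqref{eq6.1} is uniformly bounded in $L^\infty$. By the maximum principle recalled in Remark \ref{rem3.1} we have $|\tilde\varphi^\varepsilon|\le 1$, so $W'(\tilde\varphi^\varepsilon)$ and $\sqrt{2W(\tilde\varphi^\varepsilon)}$ are bounded by absolute constants, while $\varepsilon\tilde g^\varepsilon$ is bounded by $d/R_0$ via \eqref{eq6.2}. For any base point $(\tilde x_0,\tilde t_0)$ with $\tilde t_0\ge 1$ (that is, $t_0\ge\varepsilon^2$) the parabolic cylinder $Q_1:=B_1(\tilde x_0)\times(\tilde t_0-1,\tilde t_0)$ is contained in $\Omega_\varepsilon\times(0,T/\varepsilon^2)$, once $\varepsilon$ is small relative to $T$.

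Next I would run a short interior bootstrap on $Q_1$. The parabolic Calder\'on--Zygmund $L^p$ estimate gives $\|\tilde\varphi^\varepsilon\|_{W^{2,p}(Q_{1/2})}\le C$ for any $p<\infty$, with $C$ depending only on $p$, $d$, and the above source bound. Parabolic Sobolev embedding upgrades this to $\tilde\varphi^\varepsilon\in C^{1,\alpha}(Q_{1/2})$ with an $\varepsilon$-independent bound. Feeding this back, the forcing becomes $C^\alpha$ in $(\tilde x,\tilde t)$ — here one uses also the uniform $C^2$-bound on $\tilde g^\varepsilon$ supplied by \eqref{eq6.2} — and the interior parabolic Schauder estimate applied on $Q_{1/4}$ yields $\|\tilde\varphi^\varepsilon\|_{C^{2,\alpha}(Q_{1/4})}\le c$, uniformly in the base point and in $\varepsilon$. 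In particular $|\nabla_{\tilde x}\tilde\varphi^\varepsilon|$ and $|\nabla_{\tilde x}^2\tilde\varphi^\varepsilon|$ are bounded by $c$ on $\Omega_\varepsilon\times[1,T/\varepsilon^2)$, and the rescaling then produces \eqref{eq6.3}.

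I do not expect a genuine obstacle here: $D_1$ and $T$ enter only through the existence and smoothness of $\varphi^\varepsilon$ on $\Omega\times[0,T)$, which is guaranteed by Proposition \ref{prop3.3}, and the rest is essentially a bookkeeping check that every constant depends only on $d$, $R_0$, and the $C^2$-bound $M$ from \eqref{eq6.2}. The one point that deserves care is the restriction $t\ge\varepsilon^2$: this is exactly what guarantees that a rescaled parabolic cylinder of unit radius in time fits inside the domain, so that the interior estimates may be applied at every point at which the bound is claimed.
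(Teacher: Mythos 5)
Your argument is correct and is essentially the paper's: parabolically rescale to \eqref{eq6.1}, use the uniform $L^\infty$ bound from Remark~\ref{rem3.1} together with \eqref{eq6.2} to bound the forcing, apply interior parabolic regularity on unit rescaled cylinders, and undo the scaling. The paper compresses the $L^p$--Sobolev--Schauder bootstrap you spell out into a citation of \cite{MR0241822} and \cite[Lemma~4.1]{takasao-tonegawa}, but the mechanism and the role of the restriction $t\ge\varepsilon^2$ are the same.
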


\begin{proof}
For the rescaled solution to \eqref{eq6.1} with \eqref{eq6.2}, the standard parabolic argument implies
the interior estimates of $|\nabla_{\tilde x} \tilde \varphi ^\varepsilon|$
and $|\nabla ^2 _{\tilde x} \tilde \varphi ^\varepsilon|$ (see \cite{MR0241822} and \cite[Lemma 4.1]{takasao-tonegawa}).
Hence we obtain \eqref{eq6.3}.
\end{proof} 


\section{Subsolution and supersolution}
We construct simple subsolutions and supersolutions
to \eqref{ac}
that represent obstacles.
In this section, we extend $\Omega$, $O_{\pm}$, and
the solution $\varphi ^{\varepsilon}$
periodically to $\mathbb{R}^d$. 
Set $\underline{r_y} (x) =\dfrac{1}{2R_0} (R^2 _0 -|x-y| ^2) $,
$\underline{\varphi ^\varepsilon _y} (x) =q^\varepsilon (\underline{r_y} (x))$ and
$\overline{\varphi ^\varepsilon _y} (x) =-q^\varepsilon (\underline{r_y} (x))
=q^\varepsilon (-\underline{r_y} (x))$ on $\mathbb{R}^d$.
\begin{lemma}\label{lem4.1}
Assume that $B_{R_0} (y) \subset O_{+}$. 
Then there exists $\epsilon _1 =\epsilon_1 (d,R_0) >0$  such that
$\underline{\varphi ^\varepsilon _y}$ is a subsolution to \eqref{ac} 
with $\mathbb{R}^d$ instead of $\Omega$,
for any $\varepsilon \in(0,\epsilon_1)$.
\end{lemma}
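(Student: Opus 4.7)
Since $\underline{\varphi^\varepsilon_y}$ is time-independent, I need to show
$$\varepsilon \Delta \underline{\varphi^\varepsilon_y} - \frac{W'(\underline{\varphi^\varepsilon_y})}{\varepsilon} + g^\varepsilon \sqrt{2W(\underline{\varphi^\varepsilon_y})} \geq 0 \qquad \text{on } \mathbb{R}^d.$$
From $\underline{r_y}(x) = \tfrac{1}{2R_0}(R_0^2 - |x-y|^2)$ I get $|\nabla \underline{r_y}|^2 = |x-y|^2/R_0^2 = 1 - 2\underline{r_y}/R_0$ and $\Delta \underline{r_y} = -d/R_0$. The chain rule together with the ODE \eqref{q} (giving $q^\varepsilon_{rr} = W'(q^\varepsilon)/\varepsilon^2$, and since $q^\varepsilon_r > 0$, $\varepsilon q^\varepsilon_r = \sqrt{2W(q^\varepsilon)}$) allows me to rewrite the left-hand side as
$$-\frac{2\underline{r_y}\, W'(\underline{\varphi^\varepsilon_y})}{R_0\varepsilon} + \Bigl(g^\varepsilon - \frac{d}{R_0}\Bigr)\sqrt{2W(\underline{\varphi^\varepsilon_y})}.$$
Using $W'(s) = -2s(1-s^2)$ and $\sqrt{2W(s)} = 1-s^2$ for $|s|\leq 1$, together with $q^\varepsilon(\underline{r_y}) = \tanh(\underline{r_y}/\varepsilon)$, this factors as $(1-q^\varepsilon(\underline{r_y})^2)\bigl[\tfrac{4\underline{r_y}\tanh(\underline{r_y}/\varepsilon)}{R_0\varepsilon} + g^\varepsilon - \tfrac{d}{R_0}\bigr]$. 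Since the first factor is strictly positive, the problem reduces to showing
$$\frac{4\underline{r_y}(x)\tanh(\underline{r_y}(x)/\varepsilon)}{R_0 \varepsilon} \geq \frac{d}{R_0} - g^\varepsilon(x) \qquad \text{for every } x \in \mathbb{R}^d.$$

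\textbf{Easy case: $|x-y| \leq R_0$.} Here $\underline{r_y}(x) \geq 0$, so the left-hand side is $\geq 0$. Since $B_{R_0}(y) \subset O_+$, every point of $\overline{B_{R_0}(y)}$ has $\dist(\cdot, O_+)=0 \leq \sqrt{\varepsilon}$, so by \eqref{def-g} we have $g^\varepsilon(x) = d/R_0$ and the right-hand side vanishes. Done.

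\textbf{Harder case: $|x-y| > R_0$.} Now $\underline{r_y} < 0$, and $\underline{r_y}\tanh(\underline{r_y}/\varepsilon) = |\underline{r_y}|\tanh(|\underline{r_y}|/\varepsilon) > 0$. If $g^\varepsilon(x) = d/R_0$ the right-hand side is zero and we are done. Otherwise $\dist(x,O_+) > \sqrt{\varepsilon}$ by \eqref{def-g}; combined with $B_{R_0}(y) \subset O_+$, this gives $\dist(x,O_+) \leq \dist(x,B_{R_0}(y)) = |x-y| - R_0$, hence $|x-y| - R_0 > \sqrt{\varepsilon}$, and therefore
$$|\underline{r_y}(x)| = \frac{(|x-y|-R_0)(|x-y|+R_0)}{2R_0} \geq |x-y|-R_0 > \sqrt{\varepsilon}.$$
For $\varepsilon$ small enough that $\tanh(1/\sqrt{\varepsilon}) \geq 1/2$, the left-hand side is at least $\tfrac{4\sqrt{\varepsilon}\cdot (1/2)}{R_0\varepsilon} = \tfrac{2}{R_0\sqrt{\varepsilon}}$. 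The right-hand side is at most $2d/R_0$ since $|g^\varepsilon| \leq d/R_0$, so choosing $\epsilon_1 = \epsilon_1(d,R_0)$ with $\sqrt{\epsilon_1}\leq 1/d$ (and also small enough for the $\tanh$ bound) concludes the case.

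\textbf{Main obstacle.} The delicate regime is $|x-y|$ slightly larger than $R_0$: there $\underline{r_y}$ is close to $0$, so the positive term on the left is small. The argument exploits the fact that precisely in this region $x$ lies near $\partial B_{R_0}(y) \subset \overline{O_+}$, which forces $g^\varepsilon = d/R_0$ and eliminates the dangerous right-hand side. The $\sqrt{\varepsilon}$-cutoff in the definition \eqref{def-g} of $g^\varepsilon$ is calibrated to exactly this scale, which is why only $d$ and $R_0$ (and not $R_1$ or $\delta_1$) enter the threshold $\epsilon_1$.
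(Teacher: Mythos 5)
Your proof is correct and follows essentially the same route as the paper: after the chain rule/algebraic reduction, the inequality you must prove (dividing out the positive factor $1-(q^\varepsilon)^2$) is exactly the negative of the paper's expression in \eqref{eq:4.3}, and your three cases correspond precisely to the paper's cases $|x-y|\leq R_0$, $R_0\leq|x-y|\leq R_0+\sqrt\varepsilon$, and $|x-y|\geq R_0+\sqrt\varepsilon$, with the same final estimate on $\tanh$ and the same bound $|g^\varepsilon|\leq d/R_0$. The only cosmetic difference is that you work directly with $\underline{\varphi^\varepsilon_y}$ and factor out $1-(q^\varepsilon)^2$, whereas the paper first passes to the equivalent equation \eqref{ac2} for $r^\varepsilon$; and you split the annular region by the value of $g^\varepsilon$ rather than by $|x-y|$, which via $\dist(x,O_+)\leq|x-y|-R_0$ amounts to the same thing.
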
 

\begin{proof}
Without loss of generality we assume $y=0$.
Let $\varphi ^\varepsilon$ be a solution to \eqref{ac} with $\mathbb{R}^d$ instead of $\Omega$.
By Remark \ref{rem3.1} and \eqref{q}, we have
\begin{equation*}
\begin{split}
\sqrt{2W(q ^\varepsilon )} r_t ^\varepsilon 
= 
\varepsilon q_r ^\varepsilon r_t ^\varepsilon
&=
\varepsilon q_r ^\varepsilon \Delta r^\varepsilon
+ \varepsilon q_{rr} ^\varepsilon |\nabla r ^\varepsilon | ^2
-\dfrac{W' (q ^\varepsilon )}{\varepsilon } 
+ g^{\varepsilon} \sqrt{2W(q ^\varepsilon )}\\
&=
\sqrt{2W(q ^\varepsilon )} \Delta r^\varepsilon
+ \dfrac{W' (q ^\varepsilon )}{\varepsilon } ( |\nabla r ^\varepsilon | ^2 -1) 
+ g^{\varepsilon} \sqrt{2W(q ^\varepsilon )}.
\end{split}
\end{equation*}
Thus the first equation \eqref{ac} 
with $\mathbb{R}^d$ instead of $\Omega$ is equivalent to
\begin{equation}
r_t^\varepsilon - \Delta r^\varepsilon + \frac{2q^\varepsilon (r^\varepsilon)}{\varepsilon} 
(|\nabla r^\varepsilon |^2 -1) - g^\varepsilon =0, 
\qquad (x,t) \in \mathbb{R}^d \times (0,\infty),
\label{ac2}
\end{equation}
where we used $W' (q^\varepsilon) /\sqrt{2W(q^\varepsilon)} = -2 q^\varepsilon$.
Therefore we only need to prove
\begin{equation}
- \Delta \underline{r_0} + \frac{2q^\varepsilon (\underline{r_0})}{\varepsilon} 
(|\nabla \underline{r_0} |^2 -1) - g^\varepsilon \leq 0, 
\qquad (x,t) \in \mathbb{R}^d \times (0,\infty)
\label{eq:4.2}
\end{equation}
for sufficiently small $\varepsilon >0$. We compute that
\begin{equation}
- \Delta \underline{r_0} + \frac{2q^\varepsilon (\underline{r_0})}{\varepsilon} 
(|\nabla \underline{r_0} |^2 -1) -  g^\varepsilon 
= \frac{d}{R_0} + \frac{2q^\varepsilon (\underline {r_0})}{\varepsilon} 
\left(\frac{ |x|^2 }{R^2 _0} -1 \right) - g^\varepsilon .
\label{eq:4.3}
\end{equation}

First we consider the case of $|x| \leq R_0$. We compute that
\begin{equation}
\frac{d}{R_0} + \frac{2q^\varepsilon (\underline {r_0})}{\varepsilon} 
\left(\frac{ |x|^2 }{R^2 _0} -1 \right) -g^\varepsilon
\leq \frac{d}{R_0} - \frac{d}{R_0} = 0,
\label{eq:4.4}
\end{equation}
where we used $\frac{2q^\varepsilon (\underline {r_0})}{\varepsilon} 
\left(\frac{ |x|^2 }{R^2 _0} -1 \right) \leq 0$
and $g^\varepsilon (x) =\frac{d}{R_0} $.
Therefore \eqref{eq:4.3} and \eqref{eq:4.4} imply \eqref{eq:4.2}
if $|x| \leq R_0$.

Next we consider the case of $R_0 \leq |x| \leq R_0 + \sqrt{\varepsilon}$.
Note that 
$\frac{2q^\varepsilon (\underline {r_0})}{\varepsilon} 
\left(\frac{ |x|^2 }{R^2 _0} -1 \right) \leq 0$ 
and $g^\varepsilon (x) =\frac{d}{R_0} $ also hold in this case.
Hence we obtain \eqref{eq:4.2} if $R_0 \leq |x| \leq R_0 + \sqrt{\varepsilon}$.

Finally we consider the case of $ R_0 + \sqrt{\varepsilon} \leq |x|$. 
We compute
\begin{equation*}
\begin{split}
\frac{2q^\varepsilon (\underline {r_0})}{\varepsilon} 
\left(\frac{ |x|^2 }{R^2 _0}-1 \right) 
\leq & \,
\frac{2}{\varepsilon} \tanh \left( - \frac{ 2R_0 \sqrt{\varepsilon} +\varepsilon }{2R_0 \varepsilon} \right) 
\frac{ 2R_0 \sqrt{\varepsilon} +\varepsilon }{R_0 ^2} \\
\leq & \, -\frac{4}{\sqrt{\varepsilon}R_0 } \tanh \Big(\frac{1}{\sqrt{\varepsilon}}\Big).
\end{split}
\end{equation*}
Therefore we have
\begin{equation*}
\begin{split}
\frac{d}{R_0} + \frac{2q^\varepsilon (\underline {r_0})}{\varepsilon} 
\left(\frac{ |x|^2 }{R^2 _0} -1 \right) - g^\varepsilon 
& 
\leq 2 \frac{d}{R_0}
 -\frac{4}{\sqrt{\varepsilon}R_0 } \tanh \Big(\frac{1}{\sqrt{\varepsilon}}\Big),
\end{split}
\end{equation*}
where we used $\max _{x\in \mathbb{R}^d}|g^\varepsilon(x) | \leq \frac{d}{R_0}$.
Thus there exists $\epsilon_1 =\epsilon_1 (d, R_0) >0$ such that \eqref{eq:4.2} holds
for any $\varepsilon \in(0,\epsilon_1)$.
\end{proof}

Similarly, we obtain
\begin{lemma}\label{lem4.2}
Assume that $B_{R_0} (y) \subset O_{-}$. 
Then $\overline{\varphi ^\varepsilon _y}$ 
is a supersolution to \eqref{ac} with $\mathbb{R}^d$ instead of $\Omega$,
for any $\varepsilon \in(0,\epsilon_1)$, where
$\epsilon_1$ is as in Lemma \ref{lem4.1}.
\end{lemma}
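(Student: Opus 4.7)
The plan is to mirror the proof of Lemma \ref{lem4.1} with the opposite sign. Since $q^\varepsilon_r > 0$, the reformulation leading to \eqref{ac2} makes being a super- or subsolution of \eqref{ac} equivalent to the corresponding differential inequality in the variable $r^\varepsilon$ (dividing by $q^\varepsilon_r(r^\varepsilon)$ preserves inequalities). Hence $\overline{\varphi^\varepsilon_y} = q^\varepsilon(-\underline{r_y})$ is a supersolution to \eqref{ac} on $\mathbb{R}^d$ if and only if $r := -\underline{r_y}$ satisfies the reverse of inequality \eqref{eq:4.2}. Taking $y = 0$ without loss of generality and using $\Delta \underline{r_0} = -d/R_0$, $|\nabla \underline{r_0}|^2 = |x|^2/R_0^2$, and the oddness $q^\varepsilon(-s) = -q^\varepsilon(s)$, this reduces to showing
\[
\frac{d}{R_0} - \frac{2q^\varepsilon(\underline{r_0})}{\varepsilon}\left(\frac{|x|^2}{R_0^2} - 1\right) - g^\varepsilon \geq 0.
\]

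I would split into the same three annular regions as in Lemma \ref{lem4.1}. On $|x| \leq R_0$ the factor $q^\varepsilon(\underline{r_0})$ is nonnegative while $|x|^2/R_0^2 - 1$ is nonpositive, and on $R_0 \leq |x| \leq R_0 + \sqrt{\varepsilon}$ the signs are exchanged; in either case the middle term is nonnegative. Moreover, since $B_{R_0}(0) \subset O_{-}$, throughout $|x| \leq R_0 + \sqrt{\varepsilon}$ one has $\dist(x, O_-) \leq \sqrt{\varepsilon}$, so $g^\varepsilon = -d/R_0$ by \eqref{def-g}, and the left-hand side is at least $2d/R_0 > 0$.

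For the outer region $|x| \geq R_0 + \sqrt{\varepsilon}$, the same $\tanh$ estimate as in Lemma \ref{lem4.1} (with the sign flipped) yields $-\frac{2q^\varepsilon(\underline{r_0})}{\varepsilon}(|x|^2/R_0^2 - 1) \geq \frac{4}{\sqrt{\varepsilon}R_0}\tanh(1/\sqrt{\varepsilon}) \geq 0$, and combined with $g^\varepsilon \leq d/R_0$ this already gives the desired inequality for every $\varepsilon > 0$; in particular the threshold $\epsilon_1$ from Lemma \ref{lem4.1} certainly suffices. I do not expect any substantive obstacle, as the whole argument is a sign-flipped mirror of the previous lemma, the only real change being the use of $g^\varepsilon = -d/R_0$ near $O_{-}$ in place of $g^\varepsilon = +d/R_0$ near $O_{+}$.
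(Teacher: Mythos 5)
Your approach is the same one the paper intends by ``Similarly, we obtain'': reduce to the radial variable via \eqref{ac2} and run the three-region estimate from Lemma~\ref{lem4.1}. The framework is right, but there is a sign error in the reduction. Substituting the stationary $r = -\underline{r_0}$ into $-\Delta r + \frac{2q^\varepsilon(r)}{\varepsilon}(|\nabla r|^2 - 1) - g^\varepsilon \geq 0$, the Laplacian term gives $-\Delta(-\underline{r_0}) = \Delta \underline{r_0} = -d/R_0$, not $+d/R_0$. The inequality you actually need is
\[
-\frac{d}{R_0} - \frac{2q^\varepsilon(\underline{r_0})}{\varepsilon}\left(\frac{|x|^2}{R_0^2} - 1\right) - g^\varepsilon \geq 0,
\]
equivalently $\frac{d}{R_0} + \frac{2q^\varepsilon(\underline{r_0})}{\varepsilon}\left(\frac{|x|^2}{R_0^2} - 1\right) + g^\varepsilon \leq 0$, which is \eqref{eq:4.2} verbatim with $g^\varepsilon$ replaced by $-g^\varepsilon$. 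Since $g^\varepsilon = -d/R_0$ near $O_-$ plays exactly the role that $+d/R_0$ played near $O_+$, the three-region case analysis carries over word for word.

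The sign error propagates into your region estimates. In the inner region the correct lower bound is $0$, not $2d/R_0$. More importantly, in the outer region the correct expression is $-\frac{2d}{R_0} + \frac{4}{\sqrt{\varepsilon}R_0}\tanh(1/\sqrt{\varepsilon})$, which is \emph{not} nonnegative for every $\varepsilon > 0$; it requires the same smallness $\frac{4}{\sqrt{\varepsilon}}\tanh(1/\sqrt{\varepsilon}) \geq 2d$ that defines $\epsilon_1$ in Lemma~\ref{lem4.1}. Your claim that the outer region ``already gives the desired inequality for every $\varepsilon > 0$'' is therefore false, though your final conclusion — that the threshold $\epsilon_1$ from Lemma~\ref{lem4.1} suffices — happens to be correct because you restrict to $\varepsilon < \epsilon_1$ anyway.
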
 

In order to use the comparison principle, we need the following estimates for the initial data.
\begin{lemma}\label{lem4.3}
Assume that $B_{R_0} (y) \subset O_{+}$ and $B_{R_0} (z) \subset O_{-}$. 
Then 
\begin{equation}
\underline{\varphi ^{\varepsilon_i} _y} (x) \leq \varphi ^{\varepsilon_i} _0 (x) 
\qquad \text{and} \qquad
\overline{\varphi ^{\varepsilon_i} _z} (x) \geq \varphi ^{\varepsilon_i} _0 (x) ,
\qquad x \in \mathbb{R}^d
\label{eq:4.5}
\end{equation}
for sufficiently large $i\geq 1$.
\end{lemma}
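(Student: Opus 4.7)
The plan is to reduce both inequalities in \eqref{eq:4.5} to pointwise comparisons of the arguments of $q^{\varepsilon_i}$. Since $q^{\varepsilon_i}$ is strictly increasing and $\overline{\varphi^{\varepsilon_i}_z}(x) = q^{\varepsilon_i}(-\underline{r_z}(x))$, the first inequality is equivalent to $\underline{r_y}(x) \leq \tilde r_0^{\varepsilon_i}(x)$ for every $x \in \mathbb{R}^d$, and the second to $\tilde r_0^{\varepsilon_i}(x) \leq -\underline{r_z}(x)$. I would carry out the subsolution bound in detail; the supersolution bound follows by the same geometric argument with $(O_+,y)$ replaced by $(O_-,z)$ and an overall sign flip.

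The main geometric step is the uniform estimate
$$
r_0^{\varepsilon_i}(x) \;\geq\; \underline{r_y}(x) + \frac{\delta_1}{2}, \qquad x \in \mathbb{R}^d,
$$
where $r_0^{\varepsilon_i}$, $M_0^i$, and $U_0^i$ are taken in their periodic extensions. The assumption $B_{R_0}(y) \subset O_+$ together with \eqref{initialdata3} forces $\dist(y, M_0^i) \geq R_0 + \delta_1/2$, since every point of $M_0^i$ is at distance $> \delta_1/2$ from $\overline{O_+}$ and hence at distance $\geq R_0 + \delta_1/2$ from the center $y$. Writing $t = |x-y|$, the triangle inequality gives $\dist(x, M_0^i) \geq R_0 + \delta_1/2 - t$ when $x \in U_0^i$; when $x$ lies outside $U_0^i$, the segment from $y$ to $x$ exits $U_0^i$ through a point of $M_0^i$ at distance $\geq R_0 + \delta_1/2$ from $y$, so $\dist(x, M_0^i) \leq t - R_0 - \delta_1/2$, and the sign change in the definition of $r_0^{\varepsilon_i}$ recovers the same lower bound $r_0^{\varepsilon_i}(x) \geq R_0 + \delta_1/2 - t$. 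A short computation then shows
$$
R_0 + \frac{\delta_1}{2} - t - \underline{r_y}(x) \;=\; \frac{R_0}{2} + \frac{\delta_1}{2} - t + \frac{t^2}{2R_0} \;\geq\; \frac{\delta_1}{2},
$$
the minimum being attained at $t = R_0$.

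To pass from $r_0^{\varepsilon_i}$ to the smoothing $\tilde r_0^{\varepsilon_i}$, I would use that the construction in Section~3 can be realized as a mollification at scale $\varepsilon_i$ of the $1$-Lipschitz function $r_0^{\varepsilon_i}$, which yields $\|\tilde r_0^{\varepsilon_i} - r_0^{\varepsilon_i}\|_{L^\infty(\Omega)} \leq C\varepsilon_i$, compatible with the stated bounds $|\nabla \tilde r_0^{\varepsilon_i}| \leq 1$ and $|\nabla^2 \tilde r_0^{\varepsilon_i}| \leq \varepsilon_i^{-1}$. Then for all $i$ with $C\varepsilon_i < \delta_1/2$ the preceding estimate upgrades to $\tilde r_0^{\varepsilon_i}(x) \geq \underline{r_y}(x)$ on $\mathbb{R}^d$, and applying the strictly increasing $q^{\varepsilon_i}$ gives the first inequality in \eqref{eq:4.5}.

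The main obstacle I anticipate is this smoothing step: the hypotheses on $\tilde r_0^{\varepsilon_i}$ listed in Section~3 do not literally record the $L^\infty$-closeness to $r_0^{\varepsilon_i}$ that I need, so I have to verify that the mollification can be performed compatibly with the gradient and Hessian bounds while still preserving the equality near $M_0^i$. Once this is in place, the symmetric argument applied to $(O_-,z)$ gives $\tilde r_0^{\varepsilon_i}(x) \leq -\underline{r_z}(x) - \delta_1/2 + C\varepsilon_i$, and monotonicity of $q^{\varepsilon_i}$ yields the second inequality.
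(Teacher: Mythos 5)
Your proof is correct and follows essentially the same route as the paper: reduce via monotonicity of $q^{\varepsilon_i}$ to comparing $\underline{r_y}$ with $\tilde r_0^{\varepsilon_i}$, and use \eqref{initialdata3} to create a $\delta_1/2$ safety margin (the paper phrases this as a contradiction argument split on the sign of $\underline{r_y}(x')$, while you give a direct, unified lower bound $r_0^{\varepsilon_i}\geq \underline{r_y}+\delta_1/2$, but the underlying geometry is identical). The smoothing step you flag as a worry is resolved exactly as you anticipate: the paper simply asserts that $\sup_x|r_0^{\varepsilon_i}-\tilde r_0^{\varepsilon_i}|<\delta_1/4$ may be arranged for large $i$, which is compatible with the gradient and Hessian bounds imposed on $\tilde r_0^{\varepsilon_i}$ in Section~3.
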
 

\begin{proof}
To show the first inequality of \eqref{eq:4.5},
we only need to prove that
\begin{equation}
\underline{r_y} (x) \leq \tilde r_0 ^{\varepsilon_i} (x) , 
\qquad x \in \mathbb{R}^d
\label{eq:4.6}
\end{equation}
for sufficiently large $i \geq 1$.

We assume that for any $N\geq 1$ there exist $i \geq N $ 
and  $x' \in \Omega$ such that 
$\underline{r_y} (x') > \tilde r _0 ^{\varepsilon_i} (x')$ 
with $\underline{r_y} (x') \geq 0$. 
In addition, we may assume that
$\sup_{x} |r _0 ^{\varepsilon_i} (x) -\tilde r_0 ^{\varepsilon_i} (x) | <\frac{\delta_1}{4}$.
Then $x' \in \overline{ B_{R_0} (y)}$
and 
\begin{equation}
0\leq \underline{r_y} (x') \leq \dist (x', \partial B_{R_0} (y))
\label{eq:4.7}
\end{equation}
by $\max _{x \in \overline{B_{R_0} (y)}} |\nabla \underline{r_y} (x) | \leq 1 $
and $\underline{r_y} =0$ on $\partial B_{R_0} (y)$.
The assumptions $B_{R_0} (y) \subset O_{+} \subset U_0 ^{i}$ and
\eqref{initialdata3} imply
\begin{equation}
\dist (x', \partial B_{R_0} (y)) + \frac{\delta _1}{2} \leq \dist (x',M_0^i) .
\label{eq:4.8}
\end{equation}
Then \eqref{eq:4.7} and \eqref{eq:4.8} imply
\begin{equation*}
\begin{split}
\underline{r_y} (x') \leq & \, \dist (x', \partial B_{R_0} (y)) 
\leq \dist (x',M_0^i) - \frac{\delta _1}{2}\\
= & \, r_0 ^{\varepsilon _i} (x') - \frac{\delta _1}{2}
< \tilde r_0 ^{\varepsilon _i} (x')  .
\end{split}
\end{equation*}
This is a contradiction to 
$\underline{r_y} (x') > \tilde r_0 ^{\varepsilon_i} (x')$.
In the case of $\underline{r_y} (x') < 0$, we may obtain a contradiction similarly,
by using $\min _{x \in (B_{R_0} (y)) ^c} |\nabla \underline{r_y} (x) | = 1 $. 
Therefore we obtain \eqref{eq:4.6}.
We can show the second inequality of \eqref{eq:4.5} by the similar argument.
\end{proof}

By Lemma \ref{lem4.1}, Lemma \ref{lem4.2}, and Lemma \ref{lem4.3}, we have
\begin{proposition}\label{prop4.4}
Assume that $\varphi ^{\varepsilon}$ is a solution to \eqref{ac},
$B_{R_0} (y) \subset O_{+}$ and $B_{R_0} (z) \subset O_{-}$. 
Then 
\begin{equation}
\underline{\varphi ^\varepsilon _y} (x) \leq \varphi ^\varepsilon (x, t) 
\leq \overline{\varphi ^\varepsilon _z} (x)
\quad \text{for any} \ (x,t) \in \mathbb{R}^d \times [0,\infty)
\label{eq:4.9}
\end{equation}
for sufficiently small $\varepsilon >0$.
\end{proposition}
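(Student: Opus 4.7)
The plan is to combine the three preceding lemmas via the parabolic comparison principle. Lemma \ref{lem4.1} exhibits $\underline{\varphi ^\varepsilon _y}$ as a time-independent subsolution of \eqref{ac} on $\mathbb{R}^d \times (0,\infty)$, Lemma \ref{lem4.2} gives $\overline{\varphi ^\varepsilon _z}$ as a supersolution there, and Lemma \ref{lem4.3} provides the initial ordering $\underline{\varphi ^{\varepsilon} _y} (x) \leq \varphi ^{\varepsilon} _0 (x) \leq \overline{\varphi ^{\varepsilon} _z} (x)$ on $\mathbb{R}^d$ for all sufficiently small $\varepsilon$. Since the periodic extension of $\varphi ^\varepsilon$ still solves \eqref{ac} on $\mathbb{R}^d \times (0,\infty)$ with the periodic extension of $g^\varepsilon$, the entire proof reduces to one application of the comparison principle for the semilinear parabolic equation on $\mathbb{R}^d$.

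Concretely, I would set $w := \varphi ^\varepsilon - \underline{\varphi ^\varepsilon _y}$. By Remark \ref{rem3.1} one has $|\varphi ^\varepsilon| < 1$, and $\underline{\varphi ^\varepsilon _y} = q^\varepsilon (\underline{r_y}) \in (-1,1)$, so $w$ is uniformly bounded on $\mathbb{R}^d \times [0,\infty)$. Subtracting the subsolution inequality for $\underline{\varphi ^\varepsilon _y}$ from the equation satisfied by $\varphi ^\varepsilon$, and using that $W'$ and $\sqrt{2W}$ are Lipschitz on $[-1,1]$ together with $\|g^\varepsilon\|_{L^\infty} \leq d/R_0$, one obtains a linear differential inequality of the form
\begin{equation*}
w_t - \Delta w + K_\varepsilon\, w \geq 0 \quad \text{on } \mathbb{R}^d \times (0,\infty),
\qquad w(\cdot,0) \geq 0 \text{ on } \mathbb{R}^d,
\end{equation*}
with $K_\varepsilon \in L^\infty(\mathbb{R}^d\times(0,\infty))$. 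The classical weak maximum principle for bounded subsolutions of such linear parabolic inequalities on $\mathbb{R}^d$ (standard Phragm\'en--Lindel\"of/Protter--Weinberger type argument, valid since $w$ is uniformly bounded) then yields $w \geq 0$ everywhere, which is the lower bound in \eqref{eq:4.9}. The upper bound follows by the symmetric argument applied to $\overline{\varphi ^\varepsilon _z} - \varphi ^\varepsilon$.

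The only delicate point is the invocation of the maximum principle on the unbounded domain $\mathbb{R}^d$: $\underline{\varphi ^\varepsilon _y}(x) \to -1$ as $|x|\to\infty$ while $\varphi ^\varepsilon$ is only periodic, so one cannot appeal to a comparison on a compact torus directly. However, uniform boundedness of $w$ together with boundedness of the zeroth-order coefficient $K_\varepsilon$ is exactly what the standard maximum principle on $\mathbb{R}^d$ requires, so no growth hypothesis fails. The threshold for "sufficiently small $\varepsilon$" is then simply $\varepsilon < \epsilon_1$ from Lemmas \ref{lem4.1}--\ref{lem4.2}, intersected with the range of $\varepsilon$ furnished by Lemma \ref{lem4.3}.
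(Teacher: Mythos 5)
Your proposal is correct and takes exactly the route the paper intends: the paper states that Proposition \ref{prop4.4} follows from Lemmas \ref{lem4.1}, \ref{lem4.2}, and \ref{lem4.3} via ``the standard comparison principle'' and omits the details, which you have simply filled in. Your linearization of $w = \varphi^\varepsilon - \underline{\varphi^\varepsilon_y}$, the observation that $W'$ and $\sqrt{2W} = 1-s^2$ are smooth on $[-1,1]$ so that the zeroth-order coefficient $K_\varepsilon$ (which scales like $\varepsilon^{-2}$ but is bounded for fixed $\varepsilon$) lies in $L^\infty$, and the remark that uniform boundedness of $w$ allows the Phragm\'en--Lindel\"of version of the parabolic maximum principle on all of $\mathbb{R}^d$, are all sound and are precisely the ingredients the phrase ``standard comparison principle'' compresses.
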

For the proof, we only need to use the standard comparison principle. 
Therefore we omit it.


\section{Existence of weak solution to mean curvature flow with obstacles}
In this section, we prove the global existence of the weak solution
to the mean curvature flow with obstacles in the sense of Brakke's mean curvature flow.

\begin{theorem}\label{thm5.3}
Let $d= 2$ or $3$ and $\{ \varepsilon _i \} _{i=1} ^\infty$ 
be a positive sequence that converges to $0$.
Assume that $M_0$ and $O_\pm$ satisfy all the assumptions in Section 2.
Let $\varphi ^{\varepsilon_i}$ be a solution to \eqref{ac} with initial data 
$\varphi ^{\varepsilon _i} _0$ which satisfies \eqref{initial}
and $\mu _t ^{\varepsilon_i}$ be a Radon measure defined in \eqref{eq5.1}.
Then there exist a subsequence $\{ \varepsilon _{i_j} \} _{j=1} ^\infty$,
a family of Radon measures $\{ \mu_t \}_{t \in [0,\infty)}$, and
$\psi  \in BV_{loc} (\Omega \times [0,\infty) 
\cap C^{\frac12} _{loc} ([0,\infty) ;L^1 (\Omega))$
such that the following hold:
\begin{enumerate}
\item[{\rm (1)}] $\mu _0 = \mathscr{H}^{d-1} \lfloor_{M_0}$.
\item[{\rm (2)}] For any $t \in [0,\infty)$, $\mu _t ^{\varepsilon_{i_j}}$ converges to $\mu _t$ as Radon measures.
\item[{\rm (3)}] For a.e. $t\geq 0$, $\mu _t$ is $(d-1)$-integral.
\item[{\rm (4)}] $\psi =0$ or $1$ a.e. on $\Omega \times [0,\infty)$, and 
$\varphi ^{\varepsilon _i} \to 2\psi -1$ in $L^1 _{loc} (\Omega \times (0,\infty))$ and a.e. pointwise.
In addition, $\psi (\cdot,0) =\chi _{U_0}$ a.e. on $\Omega$ and
$\| \nabla \psi (\cdot,t)\| (\phi) \leq \mu _t (\phi)$ 
for all $t \in [0,\infty)$ and $\phi \in C_c(\Omega;[0,\infty))$.
\item[{\rm (5)}] $\spt \, \mu _t  \cap O_\pm=\emptyset$ for any $t \geq 0$,
and $\psi =1$ a.e. on $O_+ \times [0,\infty)$
and $\psi =0$ a.e. on $O_- \times [0,\infty)$.
\item[{\rm (6)}] $\{ \mu _t\} _{t \in [0,\infty)}$ is a Brakke's mean curvature flow 
on $\Omega \setminus \overline{O_+ \cup O_- }$, that is,
\begin{equation}
\int _\Omega \phi \, d\mu _t \Big|_{t=t_1} ^{t_2} 
\leq 
\int _{t_1} ^{t_2} \int _\Omega \{ (\nabla \phi -\phi h) \cdot h + \phi_t \} \, d\mu _t dt  
\label{eq5.8}
\end{equation}
for all $0\leq t_1 <t_2 <\infty$ and
$\phi \in C_c ^1 (\Omega \setminus \overline{O_+ \cup O_- }\times [0,\infty); [0,\infty))$.
\item[{\rm (7)}] 
As an additional assumption, suppose that $D_0$ used in \eqref{initialdata1} satisfies $D_0 <2$.
Then there exists $T_1 >0$ such that $\|\nabla \psi (\cdot,t)\| =\mu_t$ for a.e. $t \in [0,T_1)$.
\end{enumerate}
\end{theorem}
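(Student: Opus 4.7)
\medskip

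\noindent\textbf{Proof plan.} The strategy is the phase-field approach of Ilmanen~\cite{ilmanen1993}, adapted to handle the forcing term $g^{\varepsilon_i}$ and the obstacle constraints via the sub/supersolutions of Section~4. Proposition~\ref{prop3.3} yields the key uniform bound $\mu_t^{\varepsilon_i}(\Omega) \leq D_1 e^{(d^2/R_0^2)t}$ for all $t \geq 0$, so for each fixed $t$ the family $\{\mu_t^{\varepsilon_i}\}_i$ is tight in the space of Radon measures on $\Omega$. First I would apply a diagonal extraction over a countable dense set of times (e.g.\ $\Q \cap [0,\infty)$) to obtain a subsequence along which $\mu_t^{\varepsilon_{i_j}} \to \mu_t$ for every rational $t$. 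To upgrade this to convergence at every $t \in [0,\infty)$ (item~(2)), I would use the monotonicity formula \eqref{eq3.10}: for a fixed test function $\phi \in C_c^2(\Omega)$ the map $t \mapsto \mu_t^{\varepsilon_i}(\phi) - C t$ has one-sided control (up to discrepancy terms vanishing in the limit), forcing $\mu_t(\phi)$ to be well-defined and continuous from the right.

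Next, simultaneously, one extracts the limit phase function. Since $\int_\Omega W(\varphi^{\varepsilon_i})/\varepsilon_i\,dx$ is bounded in $L^\infty_{\mathrm{loc}}([0,\infty))$, Fatou forces $\varphi^{\varepsilon_i} \to \pm 1$ a.e., and the standard Modica-Mortola-type computation together with \eqref{eq3.9} and Remark~\ref{rem4.2} gives $\psi := \lim (1+\varphi^{\varepsilon_{i_j}})/2 \in \{0,1\}$ a.e., with $\|\nabla \psi(\cdot,t)\|(\phi) \leq \mu_t(\phi)$ (item~(4)). The $C^{1/2}_{\mathrm{loc}}$-in-time regularity of $\psi$ with values in $L^1(\Omega)$ is a Cauchy-in-time estimate using $\int \varepsilon(\varphi^\varepsilon_t)^2$ controlled by \eqref{eq3.9} together with $|\partial_t((1+\varphi^{\varepsilon_i})/2)| \lesssim \sqrt{W(\varphi^{\varepsilon_i})/\varepsilon_i} \cdot \sqrt{\varepsilon_i (\varphi^{\varepsilon_i}_t)^2}/\sqrt{W}$-type arguments, following Ilmanen. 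Item~(1) is exactly Proposition~\ref{prop3.2}, and the initial trace $\psi(\cdot,0) = \chi_{U_0}$ comes from \eqref{initial} together with \eqref{initialdata2}.

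For the obstacle item~(5), Proposition~\ref{prop4.4} is the key: for every ball $B_{R_0}(y) \subset O_+$ one has $\underline{\varphi^{\varepsilon_i}_y} \leq \varphi^{\varepsilon_i}(\cdot,t)$, so $\varphi^{\varepsilon_i}(x,t) \to 1$ uniformly on each compact set inside $O_+$, forcing $W(\varphi^{\varepsilon_i})/\varepsilon_i \to 0$ locally uniformly there and hence $\spt\mu_t \cap O_+ = \emptyset$ and $\psi \equiv 1$ a.e.\ on $O_+ \times [0,\infty)$; symmetrically on $O_-$ via Lemma~\ref{lem4.2}. Item~(6), Brakke's inequality on $\Omega \setminus \overline{O_+ \cup O_-}$, is the main obstacle. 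For test functions $\phi \in C_c^1((\Omega\setminus\overline{O_+\cup O_-})\times [0,\infty);[0,\infty))$, the support of $\phi$ avoids $O_\pm$ entirely, and for $i$ large enough we have $g^{\varepsilon_i} \equiv 0$ on $\spt\phi$. On this set the Allen-Cahn equation is the standard one, so the calculation of Ilmanen (and the refinement of Tonegawa~\cite{tonegawa2014} in $d=2,3$) yields the discrete Brakke-type inequality
\begin{equation*}
\mu_t^{\varepsilon_i}(\phi(\cdot,t))\Big|_{t=t_1}^{t_2}
\leq \int_{t_1}^{t_2}\!\int_\Omega \bigl\{(\nabla\phi - \phi h^{\varepsilon_i})\cdot h^{\varepsilon_i} + \phi_t\bigr\}\,d\mu_t^{\varepsilon_i}\,dt + o(1),
\end{equation*}
where $h^{\varepsilon_i} := -(\varepsilon_i\Delta\varphi^{\varepsilon_i} - W'(\varphi^{\varepsilon_i})/\varepsilon_i)\nabla\varphi^{\varepsilon_i}/|\nabla\varphi^{\varepsilon_i}|^2$. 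Passing to the limit requires (a) vanishing of the discrepancy $\int|\varepsilon_i|\nabla\varphi^{\varepsilon_i}|^2/2 - W(\varphi^{\varepsilon_i})/\varepsilon_i|$ in spacetime, which follows on $\spt\phi$ by the forcing-free monotonicity argument, (b) $(d-1)$-integrality of $\mu_t$ for a.e.\ $t$ via Tonegawa's rectifiability theorem (using the $L^2$ bound on $h^{\varepsilon_i}$ from \eqref{eq3.7}), yielding item~(3), and (c) lower semicontinuity of the Dirichlet-type term under the Radon measure convergence.

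Finally, item~(7) (unit density, hence $\|\nabla\psi(\cdot,t)\| = \mu_t$ for small $t$) is established by comparing with a short-time classical smooth mean curvature flow starting from a smooth approximation $M_0^i$: by the avoidance principle between $\mu_t$ and the classical flow together with the monotonicity formula \eqref{eq3.10} applied backward from a short time $T_1$, one excludes higher multiplicity in the limit. The hardest step will be (b)--(c) in item~(6): verifying the vanishing discrepancy and the lower semicontinuity of the squared mean curvature on the obstacle-free region, where one must carefully use that $\phi$ is compactly supported away from $\overline{O_+ \cup O_-}$ so that $g^{\varepsilon_i} \equiv 0$ on $\spt\phi$ eliminates all forcing contributions in the limit.
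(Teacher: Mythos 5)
Your overall roadmap matches the paper's: Proposition~\ref{prop3.2} for (1), the Mugnai--R\"oger compactness machinery for (2)--(3), the $w^{\varepsilon_i}=\Phi\circ\varphi^{\varepsilon_i}$ reparametrization with \eqref{eq3.9} for (4), the sub/supersolutions of Section~4 for (5), and the observation that $g^{\varepsilon_i}\equiv 0$ on the support of any test function compactly contained in $\Omega\setminus\overline{O_+\cup O_-}$ for (6). However, there is a genuine gap in your argument for item~(5). From Proposition~\ref{prop4.4} you correctly deduce that $\varphi^{\varepsilon_i}\to 1$ uniformly on compact subsets of $O_+$, and hence that $W(\varphi^{\varepsilon_i})/\varepsilon_i\to 0$ there, but this only kills the component $\hat\mu_t^{\varepsilon_i}$ of $\mu_t^{\varepsilon_i}$. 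It says nothing a priori about the Dirichlet part $\tilde\mu_t^{\varepsilon_i}(\phi)=\sigma^{-1}\int\phi\,\varepsilon_i|\nabla\varphi^{\varepsilon_i}|^2$, and you cannot appeal to equipartition (Remark~\ref{rem4.2}) because the discrepancy estimate \eqref{eq5.12} is only a spacetime $L^1$ bound, while (5) is asserted for every $t\geq 0$. The paper closes this by integrating by parts, writing $\tilde\mu_t^{\varepsilon_i}(\phi)=-\frac{\varepsilon_i}{\sigma}\int\bigl(\phi(\varphi^{\varepsilon_i}-1)\Delta\varphi^{\varepsilon_i}+(\varphi^{\varepsilon_i}-1)\nabla\phi\cdot\nabla\varphi^{\varepsilon_i}\bigr)dx$, and then feeding in the exponential closeness $|\varphi^{\varepsilon_i}-1|\leq\varepsilon_i^2$ (from the barrier) together with the interior gradient and Hessian bounds \eqref{eq6.3}; without \eqref{eq6.3} and the integration by parts, the gradient energy is simply not controlled.

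Your route for item~(7) also diverges from the paper and is shaky as stated: an avoidance principle with a classical short-time flow of $M_0^i$ constrains where $\spt\mu_t$ lives, not the multiplicity of $\mu_t$, and it is not clear what ``monotonicity applied backward'' means here. The paper instead applies Huisken's monotonicity \eqref{eq3.10} \emph{forward} from $t=0$: using the vanishing discrepancy \eqref{eq5.12} to pass to \eqref{eq5.13}, the upper density-ratio bound \eqref{initialdata1} on $M_0$ yields $\int\rho_{y,s}(\cdot,0)\,d\mu_0\leq 3/2$ for all $s\leq T_1$, while a point of multiplicity $N\geq 2$ at time $t_0<T_1$ would force $\lim_{r\to0}\int\rho_{x_0,t_0+r^2}(\cdot,t_0)\,d\mu_{t_0}\geq N$; combining these via \eqref{eq5.13} gives a contradiction for $T_1$ small. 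This is the standard and correct mechanism and you should replace your item-(7) sketch with it.
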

\begin{remark}
If $M_0$ is $C^1$, then the additional assumption of (7) holds (see Remark \ref{remark2.2}).
\end{remark}
\begin{remark}\label{remark5.3}
In a weak sense, $\tilde U(t) :=\{ x\in \Omega \, | \, \psi (x,t)=1 \}$ corresponds to 
$U_t$ in Section 1 when $\|\nabla \psi (\cdot,t)\| =\mu_t$.
This $\tilde U (t)$ has similar properties to the weak solution treated in 
\cite[Theorem 4.6]{almeida-chambolle-novaga}. 
More precisely, $\tilde U(t)$ is a Caccioppoli set for any $t\geq 0$ by 
$\| \nabla \psi (\cdot,t)\| (\Omega) \leq \mu _t (\Omega) <\infty$ 
and for any $T>0$ there exists $C_2>0$ defined below such that
$| \tilde U (t_2) \bigtriangleup \tilde U (t_1) | \leq C_2 \sqrt{t_2-t_1} $ for any 
$0 \leq t_1 <t_2 <T$ by 
$\psi \in C^{\frac12} _{loc} ([0,\infty) ;L^1 (\Omega))$ and $\psi =0$ or $1$ a.e. 
on $\Omega \times [0,\infty)$.
\end{remark}
\begin{remark}\label{remark5.4}
In the case of $d=2$ or $3$,
thanks to \cite{roger-schatzle} and \cite{mugnai-roger2011}, 
we can prove the integrality of $\mu _t$ and the Brakke's inequality by 
the standard energy estimates(see \cite{sato2008}). 
In contrast,
considering \cite{takasao-tonegawa, tonegawa2003},  
the pointwise estimate of 
$\left(\frac{\varepsilon  |\nabla \varphi  ^{\varepsilon } |^2}{2} 
-\frac{W(\varphi  ^{\varepsilon })}{\varepsilon} \right)_+$
and the parabolic monotonicity formula
seem to be important when $d\geq 4$.
\end{remark}

\begin{proof}
(1) holds by Proposition \ref{prop3.2}. 
In the case of $d=2$ or $3$, 
Proposition 4.3 and Proposition 4.4 in \cite{mugnai-roger2011} imply that if 
\begin{equation}
\sup _{i \in \N} \int _0 ^T \int _{\Omega} \frac{1}{\varepsilon_i}
\left(  g^{\varepsilon_i} \sqrt{2W(\varphi ^{\varepsilon _i})} \right) ^2
\, dxdt <\infty,
\label{eq5.2}
\end{equation}
then there exist a subsequence $\varepsilon_i \to 0$ 
(denoted by the same index) and 
a family of Radon measures $\{ \mu_t \}_{t \in [0,\infty)}$
such that (2) and (3) hold.
From \eqref{eq3.8} we have
$$
\int _0 ^T \int _{\Omega} \frac{1}{\varepsilon_i}
\left(  g^{\varepsilon_i} \sqrt{2W(\varphi ^{\varepsilon _i})} \right) ^2
\, dxdt
\leq
\frac{2 D_1  e^ {\frac{ d^2 }{R_0 ^2} T} d^2 T}{R_0 ^2}, \qquad i\geq 1.
$$
Therefore, \eqref{eq5.2} holds and we obtain (2) and (3).

Next we prove (4). Note that the proof is almost the same 
as that in \cite[Proposition 8.3]{takasao-tonegawa}.
Set
\[
\Phi (s) = \sigma ^{-1} \int_{-1} ^s \sqrt{2W(a)} \, da,
\quad 
\text{and}
\quad
w^{\varepsilon_i } = \Phi \circ \varphi ^{\varepsilon_i }.
\]
We remark that $\Phi (1) =1$ and $\Phi (-1) =0$. We compute
\begin{equation}
|\nabla w^{\varepsilon_i}|
=
\sigma^{-1} |\nabla \varphi ^{\varepsilon_i}| \sqrt{2W(\varphi ^{\varepsilon_i})}
\leq 
\sigma^{-1} 
\left(
\frac{\varepsilon _i |\nabla \varphi  ^{\varepsilon_i } |^2}{2} 
+\frac{W(\varphi  ^{\varepsilon_i })}{\varepsilon_i} 
\right)
\label{eq5.9}
\end{equation}
and
\[
|w^{\varepsilon_i} _t|
\leq 
\sigma^{-1} 
\left(
\frac{\varepsilon_i  |\varphi  ^{\varepsilon_i } _t |^2}{2} 
+\frac{W(\varphi  ^{\varepsilon_i })}{\varepsilon_i} 
\right).
\]
Thus, by \eqref{eq3.9} and \eqref{eq3.8}, there exists $C_1 =C_1 (d,R_0,D_1,W,T)>0$ such that
\[
\max _{0\leq t \leq T} \int _{\Omega} |\nabla w^{\varepsilon_i} (x,t) | \, dx
+
\int _0 ^T \int _{\Omega} |w^{\varepsilon_i} _t | \, dxdt \leq C_1
\]
for any $i \geq 1$. Therefore $\{ w^{\varepsilon_i} \} _{i=1} ^\infty$
is bounded in $BV_{loc} (\Omega \times [0,T])$.
The compactness theorem and a diagonal argument imply
that there exist a subsequence (denoted by the same index) and
$w \in BV_{loc} (\Omega \times [0,\infty))$
such that
\[
w^{\varepsilon _i} \to w 
\qquad
\text{strongly in } L^1 _{loc} (\Omega \times [0,\infty))
\] 
and a.e. pointwise. We define $\psi = (1 + \Phi^{-1} \circ w)/2$.
Then
\[
\varphi^{\varepsilon _i} \to 2\psi -1 
\qquad
\text{strongly in } L^1 _{loc} (\Omega \times [0,\infty))
\] 
and a.e. pointwise. Note that by
$ \sup _{i} \int _{\Omega} \frac{W (\varphi ^{\varepsilon _i})}{\varepsilon _i} <\infty$ for any $t\geq 0$,
$\varphi ^{\varepsilon _i} \to \pm 1$ for a.e. $(x,t)$, and hence
$\psi =1$ or $0$ for a.e. $(x,t)$. Note that we can easily check that 
$\psi =w$ on $\Omega\times [0,\infty)$ and $\psi \in BV_{loc} (\Omega \times [0,\infty))$.
For $0\leq t_1 <t_2 <T$, there exists $C_2 =C_2 (d,R_0,D_1,W,T)>0$ such that
\begin{equation}
\begin{split} 
\int _{\Omega} |w ^{\varepsilon _i} (x,t_2) -w ^{\varepsilon _i} (x,t_1)| \, dx
\leq & \,
\int _{\Omega} \int _{t_1} ^{t_2} |w ^{\varepsilon _i} _t| \, dt dx\\
\leq & \, \sigma ^{-1}
\int _{\Omega} \int _{t_1} ^{t_2} 
\left(
\frac{\varepsilon_i  |\varphi _t ^{\varepsilon_i } |^2}{2} \sqrt{t_2 -t} 
+\frac{W(\varphi  ^{\varepsilon_i })}{\varepsilon_i \sqrt{t_2 -t} } 
\right)
 \, dt dx \\
\leq & C _2 \sqrt{t_2 -t_1} .
\end{split} 
\label{eq5.7}
\end{equation} 
By \eqref{eq5.7} and
\[
\lim _{i\to \infty} \int _{\Omega} |w ^{\varepsilon _i} (x,t_2) -w ^{\varepsilon _i} (x,t_1)| \, dx
= \int _{\Omega} |\psi (x,t_2) -\psi (x,t_1)| \, dx,
\]
we obtain $\psi \in C^{\frac12} _{loc} ([0,\infty) ;L^1 (\Omega))$.
In addition, \cite[Proposition 1.4]{ilmanen1993} yields $\psi (\cdot,0) =\chi _{U_0}$ a.e. on $\Omega$,
and \eqref{eq5.9} and $\|\nabla \psi (\cdot,t)\| =\|\nabla w (\cdot,t)\|$ imply 
$\| \nabla \psi (\cdot,t)\| (\phi) \leq \mu _t (\phi)$ 
for all $t \in [0,\infty)$ and $\phi \in C_c(\Omega;[0,\infty))$.
Therefore we obtain (4).

Now we show (5). 
In order to obtain $\spt \mu _t \cap O_+ =\emptyset$, we only need to prove that
\begin{equation}
\mu _t ^{\varepsilon_i} (\overline{B_r (y)}) = 
\frac12 \tilde \mu _t ^{\varepsilon_i} (\overline{B_r (y)}) 
+ \frac12 \hat \mu _t ^{\varepsilon_i} (\overline{B_r (y)}) \to 0
\qquad \text{as} \ i \to \infty
\label{eq5.4}
\end{equation}
for any $\overline{B_r (y)} \subset O_+$ with $0<r<R_0$.
Assume that $\overline{B_r (y)} \subset O_+$.
First we show $\hat \mu _t ^{\varepsilon_i} (\overline{B_r (y)}) \to 0$.
Let $z \in \R^d$ satisfy $\overline{B_r (y)} \subset B_{R_0} (z)$. Then
$\underline{\varphi _z ^{\varepsilon_i}}  \to 1$ uniformly on $\overline{B_r (y)}$,
since $\min _{x \in \overline{B_r (y)}} \underline{r_z} (x) >0$.
In addition, Proposition \ref{prop3.4} and \eqref{eq:4.9} imply
$\underline{\varphi _z ^{\varepsilon_i}} \leq \varphi ^{\varepsilon_i} \leq 1$.
Therefore $\hat \mu _t ^{\varepsilon_i} (\overline{B_r (y)}) \to 0$. 

To prove $\tilde \mu _t ^{\varepsilon_i} (\overline{B_r (y)}) \to 0$,
we suppose that $t>0$ (in the case of $t=0$, the claim is obvious).
Let $\phi \in C_c ^\infty (O_+ )$ be an non-negative test function. 
It is enough to show $\tilde \mu _t ^{\varepsilon_i} (\phi) \to 0$.
We may assume that $\spt \phi \subset B_{R_0} (z)$ for some $z \in O_+$.
By the integration by parts, we have
\begin{equation}
\begin{split}
\tilde \mu _t ^{\varepsilon_i} (\phi) 
= & \frac{\varepsilon_i}{\sigma}\int _{\spt \phi} \phi
\nabla (\varphi ^{\varepsilon_i}-1)  \cdot \nabla \varphi ^{\varepsilon_i} \,  dx\\
= & - \frac{\varepsilon_i}{\sigma}\int _{\spt \phi} \left(
\phi (\varphi ^{\varepsilon_i}-1)  \Delta \varphi ^{\varepsilon_i} 
+ (\varphi ^{\varepsilon_i}-1) \nabla \phi \cdot \nabla \varphi ^{\varepsilon_i} 
\right) \, dx.
\end{split}
\label{eq5.5}
\end{equation}
By \eqref{eq:4.9}, Proposition \ref{prop3.4}, and $\min _{x \in \spt \phi } \underline{r_z} (x) >0$,
there exists $C_3>0$ such that 
\begin{equation*}
\tanh (C_3/\varepsilon_i) \leq \varphi ^{\varepsilon_i} (x,t) < 1, \qquad x \in \spt \phi.
\end{equation*}
Therefore
\begin{equation}
| \varphi ^{\varepsilon_i} (x,t) - 1 | 
\leq 1-\tanh (C_3/\varepsilon _i ) \leq \varepsilon_i ^{2}, \qquad x \in \spt \phi
\label{eq5.6}
\end{equation}
for sufficiently large $i\geq 1$.
By \eqref{eq6.3}, \eqref{eq5.5}, and \eqref{eq5.6}, we obtain 
$\tilde \mu _t ^{\varepsilon_i} (\phi) \to 0$.
Hence we obtain \eqref{eq5.4}, and consequently $\spt \mu _t \cap O_+ =\emptyset$.
The other case 
($\spt \mu _t \cap O_- =\emptyset$)
and the remained claims
may be proved similarly.

Next we show (6). 
Given arbitrary open set $U \subset \Omega \setminus \overline{O_+ \cup O_- }$,
$\varphi ^{\varepsilon _i}$ is a solution to
$$
\varepsilon_i \varphi ^{\varepsilon_i}_t 
=\varepsilon_i \Delta \varphi ^{\varepsilon_i } 
-\dfrac{W' (\varphi ^{\varepsilon_i })}{\varepsilon_i } 
$$
on $U$. Then \cite[Proposition 4.5]{liu-sato-tonegawa} 
(with transport term $u\equiv 0$) tells us that
$\mu_t $ satisfies Brakke's inequality \eqref{eq5.8} on $U$
(see also \cite{ilmanen1993, mugnai-roger2008, mugnai-roger2011, sato2008}). 
Nevertheless, for the convenience of the reader, we prove (6) here.
By the integration by parts, we have
\begin{equation*}
\begin{split}
& \mu _{t_2} ^{\varepsilon _i} (\phi) -\mu _{t_1} ^{\varepsilon _i} (\phi) \\
= & \, \int_{t_1} ^{t_2} \left(
\frac{1}{\sigma} \int _U (- \phi \varepsilon_i ^{-1} (w^{\varepsilon _i} )^2
+\nabla \phi \cdot \nabla \varphi^{\varepsilon _i} w^{\varepsilon _i} )\, dx 
+ \mu _t ^{\varepsilon _i} (\phi _t) \right)dt
\end{split}
\end{equation*}
for $\phi \in C_c ^1 (U\times [0,\infty);[0,\infty))$, where 
$w ^{\varepsilon _i} = - \varepsilon_i \Delta \varphi ^{\varepsilon_i } 
+\dfrac{W' (\varphi ^{\varepsilon_i })}{\varepsilon_i } $.
By \cite[Theorem 4.3]{mugnai-roger2008},
\[
\int _{s_1} ^{s_2} \int_{V} |h|^2 \, d\mu _t dt \leq 
\liminf _{i \to \infty} 
\int _{s_1} ^{s_2} \int_{V} \varepsilon ^{-1} _i (w^{\varepsilon _i})^2 \, dx dt
\]
holds for any open set $V \times (s_1 ,s_2) \subset U \times [t_1,t_2]$.
Therefore we have
\begin{equation}
\int _{t_1} ^{t_2} \int_{U} \phi |h|^2 \, d\mu _t dt \leq 
\liminf _{i \to \infty} 
\int _{t_2} ^{t_2} \int_{U} \phi \varepsilon ^{-1} _i (w^{\varepsilon _i})^2 \, dx dt.
\label{eq5.10}
\end{equation}
In addition, \cite[Lemma 7.1]{mugnai-roger2008} implies
\begin{equation}
\int _{t_1} ^{t_2} \int_{U} \nabla \phi \cdot h \, d\mu _t dt 
= 
\lim _{i \to \infty} 
\int _{t_1} ^{t_2} \int_{U} \nabla \phi \cdot \nabla \varphi^{\varepsilon _i} w^{\varepsilon _i} \, dx dt.
\label{eq5.11}
\end{equation}
By \eqref{eq5.10}, \eqref{eq5.11}, and (2), \eqref{eq5.8} holds on $U$.
Therefore $\{ \mu _t \} _{t\in [0,\infty)}$
is a Brakke's mean curvature flow on $\Omega \setminus \overline{O_+ \cup O_- }$.

Finally we prove (7). From (3), for a.e. $t \geq 0$, there exists a $(d-1)$-rectifiable set $M_t$
and $\theta _t :M_t \to \mathbb{N}$ such that
$\mu _t = \theta _t \mathscr{H}^{d-1} \lfloor _{M_t}$. We only need to prove that 
$\{ \theta _t \geq 2 \}$ has measure zero for a.e. $t \in [0,T_1)$ for a suitable $T_1 >0$
(see \cite[p.\,275]{liu-sato-tonegawa} and \cite[p.\,926]{takasao-tonegawa}). 
We will determine $T_1$ in the following.
By Proposition \ref{prop3.3}, we have
\[
\sup_{i \in \mathbb{N}}
\mu_t ^{\varepsilon_i} (\Omega) 
<\infty 
\quad 
\text{and}
\quad
\sup_{i \in \mathbb{N}}
\int _ 0^T \int _\Omega \varepsilon_i \left( -\Delta \varphi ^{\varepsilon_i }  
+\frac{W' (\varphi ^{\varepsilon_i})}{\varepsilon^2 _i} \right)^2 \, dx dt <\infty
\]
for any $t\geq 0$ and $T>0$.
Hence Proposition 6.1 in \cite{mugnai-roger2008} implies that
there exists a subsequence $\varepsilon _i \to 0$ (denoted by the same index)
such that
\begin{equation}
\lim _{i\to \infty}
\int _ 0^T \int _\Omega \left|
\frac{\varepsilon _i |\nabla \varphi  ^{\varepsilon_i } |^2}{2} 
- \frac{W(\varphi  ^{\varepsilon_i })}{\varepsilon_i} 
\right| \, dx dt =0
\label{eq5.12}
\end{equation}
for $2\leq d \leq 3$.
By \eqref{eq3.10} and \eqref{eq5.12}, we have
\begin{equation}
\int _{\mathbb{R}^d} \rho _{y,s} (x,t) \, d \mu _t  (x)
\leq
 e^{\frac{d^2 }{2R_0 ^2} t}
 \int _{\mathbb{R}^d} \rho _{y,s} (x,0) \, d \mu _0 (x)
\label{eq5.13}
\end{equation}
for any $0 \leq t <s$ and $y \in \mathbb{R}^d$.
Assume that there exists $x_0$, $t_0$, and $N\geq 2$ such that
$\theta _{t_0} (x_0) =N$ and $\lim _{r\to 0} \dfrac{\mu _{t_0} (B_r (x_0))}{\omega _{d-1} r^{d-1} } =N$.
For $r>0$ and $a>0$, we compute
\begin{equation*}
\begin{split}
& \, \int _{B_{ar}(x_0)} \rho _{x_0 ,t_0 +r^2} (x,t_0) \,d\mu _{t_0} (x) \\
= & \, 
\frac{1}{(4\pi r^2 )^{\frac{d-1}{2}}} 
\int _{B_{ar}(x_0)} e^{-\frac{|x-x_0|^2}{4r^2}} \,d\mu _{t_0} (x) \\
= & \, 
\frac{1}{(4\pi r^2 )^{\frac{d-1}{2}}} 
\int _0 ^1 \mu _{t_0} (\{ x \in B_{ar}(x_0) \, | \, e^{-\frac{|x-x_0|^2}{4r^2}} >k \}) \, dk \\
= & \, 
\frac{1}{(4\pi r^2 )^{\frac{d-1}{2}}} 
\int _{e^{-\frac{a^2}{4}}} ^1 \mu _{t_0} (B_{\sqrt{4r^2\log \frac{1}{k}}} (x_0) ) \, dk \\
\to & \,  \frac{N \omega_{d-1}}{\pi ^{\frac{d-1}{2}}} \int _{e^{-\frac{a^2}{4}}} ^1 \left( \log \frac{1}{k} \right)^{\frac{d-1}{2}} \, dk \ \ \text{as} \ r\to \infty.
\end{split}
\end{equation*}
Note that 
$
\int _0 ^1 \left( \log \frac{1}{k} \right)^{\frac{d-1}{2}} \, dk
=\Gamma (\frac{d-1}{2} +1)
=\pi ^{\frac{d-1}{2}} /\omega_{d-1}.
$
Therefore
\begin{equation}
\lim _{r\to 0} \int _{\mathbb{R}^d} \rho _{x_0 ,t_0 +r^2} (x,t_0) \,d\mu _{t_0} (x) \geq N. 
\label{eq5.14}
\end{equation}
By \eqref{initialdata1} with $D_0 <2$, there exists $T_1 \in (0,1)$ depending only on $M_0$ such that
\begin{equation}
\int  _{\mathbb{R}^d} \rho _{y,s} (x,0) \, d \mu _0 (x) <2
\label{eq5.15}
\end{equation} 
for any $(y,s) \in \mathbb{R}^d \times (0,T_1]$.
Then we would have a contradiction from \eqref{eq5.13}, 
\eqref{eq5.14}, and \eqref{eq5.15}.
Therefore we obtain (7).
\end{proof}

\section*{Acknowledgement}
The author expresses his gratitude to the referees for the helpful comments
and suggestions that helped him to improve the original manuscript.
This work was supported by JSPS KAKENHI
Grant Numbers JP20K14343, JP18H03670, 
and JSPS Leading Initiative for Excellent Young Researchers (LEADER) operated by Funds for the Development of Human Resources in Science and Technology.

\bibliography{references}

\end{document}